\providecommand{\tabularnewline}{\\}
\numberwithin{equation}{section} %% Comment out for sequentially-numbered
\numberwithin{figure}{section} %% Comment out for sequentially-numbered
\theoremstyle{plain}
\newtheorem{thm}{Theorem}[section]
  \theoremstyle{plain}
  \newtheorem{prop}[thm]{Proposition}
  \theoremstyle{definition}
  \newtheorem{defn}[thm]{Definition}
  \theoremstyle{remark}
  \newtheorem{note}[thm]{Note}
 \theoremstyle{definition}
  \newtheorem{example}[thm]{Example}
  \theoremstyle{plain}
  \newtheorem{lem}[thm]{Lemma}
  \theoremstyle{plain}
  \newtheorem{algorithm}[thm]{Algorithm}
  \theoremstyle{plain}
  \newtheorem{cor}[thm]{Corollary}
\begin{document}

\title{a graph pebbling algorithm on weighted graphs}

\author{N\'Andor Sieben}
\begin{abstract}
A pebbling move on a weighted graph removes some pebbles at a vertex
and adds one pebble at an adjacent vertex. The number of pebbles removed
is the weight of the edge connecting the vertices. A vertex is reachable
from a pebble distribution if it is possible to move a pebble to that
vertex using pebbling moves. The pebbling number of a weighted graph
is the smallest number $m$ needed to guarantee that any vertex is
reachable from any pebble distribution of $m$ pebbles. Regular pebbling
problems on unweighted graphs are special cases when the weight on
every edge is 2. A regular pebbling problem often simplifies to a
pebbling problem on a simpler weighted graph. We present an algorithm
to find the pebbling number of weighted graphs. We use this algorithm
together with graph simplifications to find the regular pebbling number
of all connected graphs with at most nine vertices.
\end{abstract}

\address{Northern Arizona University, Department of Mathematics and Statistics,
Flagstaff AZ 86011-5717, USA}

\email{nandor.sieben@nau.edu}

\keywords{weighted graph pebbling}

\subjclass[2000]{05C99}

\date{\the\month/\the\day/\the\year}

\maketitle
\global\long\def\a{\hbox{$\shortrightarrow$}}

 \setcounter{topnumber}{4} \setcounter{totalnumber}{4}

\section{Introduction}

Graph pebbling has its origin in number theory. It is a model for
the transportation of resources. Starting with a pebble distribution
on the vertices of a simple connected graph, a \emph{pebbling move}
removes two pebbles from a vertex and adds one pebble at an adjacent
vertex. We can think of the pebbles as fuel containers. Then the loss
of the pebble during a move is the cost of transportation. A vertex
is called \emph{reachable} if a pebble can be moved to that vertex
using pebbling moves. The \emph{pebbling number} of a graph is the
minimum number of pebbles that guarantees that every vertex is reachable.
There are many different variations of pebbling. For a comprehensive
list of references for the extensive literature see the survey papers
\cite{Hurlbert_survey1,Hurlbert_survey2}.

Our goal is to find an algorithm that finds the pebbling number in
a realistic amount of computing time. The main idea of the algorithm
is that if we know all the \emph{sufficient} distributions from which
a given goal vertex is reachable then we can find the \emph{insufficient}
distributions from which the goal vertex is not reachable. An insufficient
distribution must be smaller than a sufficient distribution. The pebbling
number can be found by finding an insufficient distribution with the
most pebbles. The problem is that there are too many sufficient distributions.
Luckily it suffices to find the \emph{barely sufficient} distributions
from which the goal vertex is no longer reachable after the removal
of any pebble. 

Our algorithm works even if the cost of moving a pebble from one vertex
to another varies between different vertices. To take advantage of
this, we generalize the notion of graph pebbling on weighted graphs
and we develop the basic theory of weighted graph pebbling.

The generalization is worth the effort since pebbling on many graphs
can be simplified if we replace the graph by a weighted graph with
fewer edges. For example a tree can be replaced by a weighted graph
containing a single edge. Cut vertices, leaves and ears offer the
most fruitful simplifications.

We use these simplifications and our algorithm to calculate the pebbling
number of all connected graphs with at most nine vertices. We present
the spectrum of pebbling numbers in terms of the number of vertices
in the graph.

\section{Preliminaries}

Let $G$ be a simple connected graph. We use the notation $V(G)$
for the vertex set and $E(G)$ for the edge set. We use the standard
notation $vu=uv$ for the edge $\{v,u\}\in E(G)$. A \emph{path} of
$G$ is a subgraph isomorphic to the path graph $P_{n}$ with $n\ge1$
vertices. A \emph{weighted graph} $G_{\omega}$ is a graph $G$ with
a weight function $\omega:E(G)\to{\bf N}$. 

A \emph{pebble function} on $G$ is a function $p:V(G)\to{\bf Z}$
where $p(v)$ is the number of pebbles placed at $v$. A \emph{pebble
distribution} is a nonnegative pebble function. The \emph{size} of
a pebble distribution $p$ is the total number of pebbles $\|p\|=\sum_{v\in V(G)}p(v)$.
The \emph{support} of the pebble distribution $p$ is the set $\text{supp}(p)=\{v\in V(G)\mid p(v)>0\}$.
We are going to use the notation $p(v_{1},\ldots,v_{n},*)=(a_{1},\ldots,a_{n},q(*))$
to indicate that $p(v_{i})=a_{i}$ for $i\in\{1,\ldots,n\}$ and $p(w)=q(w)$
for all $w\in V(G)\setminus\{v_{1},\ldots,v_{n}\}$.

If $vu\in E(G)$ then the \emph{pebbling move} $(v\a u)$ on the weighted
graph $G_{\omega}$ removes $\omega(vu)$ pebbles at vertex $v$ and
adds one pebble at vertex $u$, more precisely, it replaces the pebble
function $p$ with the pebble function\[
p_{(v\a u)}(v,u,*)=(p(v)-\omega(vu),p(u)+1,p(*)).\]
 Note that the resulting pebble function $p_{(v\a u)}$ might not
be a pebble distribution even if $p$ is. 

The \emph{inverse} of the pebbling move $(v\a u)$ is denoted by $(v\a u)^{-1}$.
The inverse removes a pebble from $u$ and adds two pebbles at $v$,
that is, it creates the new distribution $p_{(v\a u)^{-1}}(v,u,*)=(p(v)+2,p(u)-1,p(*))$.
Note that $(v\a u)^{-1}$ is not a pebbling move.

A \emph{pebbling sequence} is a finite sequence $s=(s_{1},\ldots,s_{k})$
of pebbling moves. The pebble function gotten from the pebble function
$p$ after applying the moves in $s$ is denoted by $p_{s}$. The
concatenation of the pebbling sequences $r=(r_{1},\ldots,r_{k})$
and $s=(s_{1},\ldots,s_{l})$ is denoted by $rs=(r_{1},\ldots,r_{k},s_{1},\ldots,s_{l})$.

A pebbling sequence $(s_{1},\ldots,s_{n})$ is \emph{executable} from
the pebble distribution $p$ if $p_{(s_{1},\ldots,s_{i})}$ is nonnegative
for all $i\in\{1,\ldots,n\}$. A vertex $x$ of $G$ is \emph{$t$-reachable}
from the pebble distribution $p$ if there is an executable pebbling
sequence $s$ such that $p_{s}(x)\ge1$. We say $x$ is \emph{reachable}
if it is $1$-reachable. 

We write $\pi_{t}(G_{\omega},x)$ for the minimum number $m$ such
that $x$ is $t$-reachable from every pebble distribution of size
$m$. We use the notation $\pi(G_{\omega},x)$ for $\pi_{1}(G_{\omega},x)$.
The \emph{$t$-pebbling number} $\pi_{t}(G_{\omega})$ is $\max\{\pi_{t}(G_{\omega},x)\mid x\in V(G)\}$.
The \emph{pebbling number} $\pi(G_{\omega})$ is the 1-pebbling number
$\pi_{1}(G_{\omega})$.

If $\omega(e)=2$ for all $e\in E(G)$ then $\pi(G_{\omega})=\pi(G)$
is the usual unweighted pebbling number. So we allow the weight function
$\omega$ to be defined only on a subset of $V(G)$ and use the default
weight of 2 for edges where $\omega$ is undefined. 

Changing the order of moves in an executable pebbling sequence $s$
may result in a sequence $r$ that is no longer executable. On the
other hand the ordering of the moves has no effect on the resulting
pebble function, that is, $p_{s}=p_{r}$. This motivates the following
definition. 

Given a multiset $S$ of pebbling moves on the weighted graph $(G_{\omega})$,
the \emph{transition digraph} $T(G,S)$ is a directed multigraph whose
vertex set is $V(G)$, and each move $(v\a u)$ in $S$ is represented
by a directed edge $(v,u)$. The transition digraph of a pebbling
sequence $s=(s_{1},\ldots,s_{n})$ is $T(G,s)=T(G,S)$, where $S=\{s_{1},\ldots,s_{n}\}$
is the multiset of moves in $s$. Let $d_{T(G,S)}^{-}$ denote the
in-degree and $d_{T(G,S)}^{+}$ the out-degree in $T(G,S)$. We simply
write $d^{-}$ and $d^{+}$ if the transition digraph is clear from
context. It is easy to see that the pebble function gotten from $p$
after applying the moves in a multiset $S$ of pebbling moves in any
order satisfies\[
p_{S}(v)=p(v)+d_{T(G,S)}^{-}(v)-\sum\{\omega(vu)\mid(v,u)\in E(T(G,S))\}\]
for all $v\in G$. For unweighted graphs the formula simplifies to\[
p_{S}(v)=p(v)+d_{T(G,S)}^{-}(v)-2d_{T(G,S)}^{+}(v).\]

\section{Cycles in the transition digraph}

In this section we present a version of the No-Cycle Lemma \cite{Betsy,Milans,Moews}.
If the pebbling sequence $s$ is executable from a pebble distribution
$p$ then we clearly must have $p_{s}\ge0$. We say that a multiset
$S$ of pebbling moves is \emph{balanced} with a pebble distribution
$p$ \emph{at vertex} $v$ if $p_{S}(v)\ge0$. The multiset $S$ is
\emph{balanced} with $p$ if $S$ is balanced with $p$ at all $v\in V(G)$,
that is, $p_{S}\ge0$. We say that a pebbling sequence $s$ is \emph{balanced}
with $p$ if the multiset of moves in $s$ is balanced with $p$.
The balance condition is necessary but not sufficient for a pebbling
sequence to be executable. A multiset of pebbling moves or a pebbling
sequence is called \emph{acyclic} if the corresponding transition
digraph has no directed cycles. 
\begin{prop}
\label{pro:untangling}If $S$ is a multiset of pebbling moves on
$G_{\omega}$ then there is an acyclic multiset $R\subseteq S$ such
that $p_{R}\ge p_{S}$ for all pebble function $p$ on $G$.\end{prop}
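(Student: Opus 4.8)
The plan is to repeatedly remove the moves lying on a directed cycle of the transition digraph and check that each such removal can only increase the resulting pebble function at every vertex. The key observation is that if $C = (v_1, v_2, \ldots, v_k, v_1)$ is a directed cycle in $T(G,S)$, then for each edge $(v_i, v_{i+1})$ of the cycle there is a corresponding move in $S$; let $C \subseteq S$ be this multiset of cycle moves and set $R_0 = S \setminus C$. I would compare $p_{R_0}(v)$ with $p_S(v)$ using the explicit formula for $p_S(v)$ given in the Preliminaries. Every vertex $v_i$ on the cycle has exactly one cycle-edge entering it and exactly one leaving it (as part of the cycle $C$); removing $C$ therefore decreases $d^-(v_i)$ by $1$ and removes exactly one term $\omega(v_i v_{i+1})$ from the out-sum at $v_i$. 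Since $\omega$ takes values in ${\bf N}$ and we may assume $\omega(e) \ge 1$ (indeed $\ge 2$ for the default weight), the net change at $v_i$ is $-1 + \omega(v_i v_{i+1}) \ge 0$, so $p_{R_0}(v_i) \ge p_S(v_i)$. Vertices not on the cycle are unaffected, so $p_{R_0} \ge p_S$ for every pebble function $p$.

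Next I would iterate: if $R_0$ still contains a directed cycle, apply the same step to obtain $R_1 \subseteq R_0$ with $p_{R_1} \ge p_{R_0} \ge p_S$, and so on. Because each step strictly decreases $|S|$ (a cycle has at least one edge, in fact at least two since $G$ is simple, but one suffices), the process terminates after finitely many steps with an acyclic multiset $R$ satisfying $p_R \ge p_S$ for all pebble functions $p$. Monotonicity is transitive, so the chain of inequalities collapses to the desired conclusion.

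The main technical point to be careful about — and the place where the argument could go wrong if stated carelessly — is the bookkeeping on in-degrees and out-sums when the cycle $C$ uses a vertex more than once or uses parallel edges (the transition digraph is a multigraph). If the cycle passes through a vertex $v$ twice, one must sum the contributions: $d^-(v)$ drops by the number of cycle-edges into $v$ and the out-sum loses the corresponding weight terms, and the inequality $-m + (\text{sum of } m \text{ weights}) \ge 0$ still holds because each weight is at least $1$. A clean way to avoid case analysis is to choose $C$ to be a \emph{simple} directed cycle (no repeated vertices), which always exists inside any subdigraph containing a cycle; then each vertex of $C$ is hit exactly once and the per-vertex computation above applies verbatim. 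I expect this to be the only real obstacle; the rest is a straightforward induction on $|S|$ using the additive formula for $p_S$.
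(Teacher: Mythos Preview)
Your proposal is correct and follows essentially the same approach as the paper: remove the moves of a directed cycle, verify the per-vertex inequality $p_R(v)=p_S(v)-1+\omega(v u_v)\ge p_S(v)$ on cycle vertices (with equality off the cycle), and iterate until no cycles remain. Your explicit attention to choosing a \emph{simple} cycle so that each vertex is hit exactly once is a point the paper leaves implicit, and your justification of termination matches the paper's.
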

\begin{proof}
Let $p$ be a pebble function on $G$. Suppose that $T(G,S)$ has
a directed cycle $C$. Let $Q$ be the multiset of pebbling moves
corresponding to the arrows of $C$ and $R=S\setminus Q$. Let $u_{v}$
be the first vertex from $v$ along $C$. Then $p_{R}(v)=p_{S}(v)-1+\omega(vu_{v})\ge p_{S}(v)$
for $v\in V(C)$ and $p_{R}(v)=p_{S}(v)$ for $v\in V(G)\setminus V(C)$. 

We can repeat this process until we eliminate all the cycles. We finish
in finitely many steps since every step decreases the number of pebbling
moves.\end{proof}
\begin{defn}
Let $S$ be a multiset of pebbling moves on $G$. An element $(v\a u)\in S$
is called an \emph{initial move} of $S$ if $d^{-}(v)=0$. A pebbling
sequence $s$ is called \emph{regular} if $s_{i}$ is an initial move
of $S\setminus\{s_{1},\ldots,s_{i-1}\}$ for all $i$.
\end{defn}
It is clear that if the multiset $S$ is balanced with a pebble distribution
$p$ and $s$ is an initial move of $S$ then $s$ is executable from
$p$.
\begin{prop}
\label{pro:regular}If $S$ is an acyclic multiset then there is a
regular sequence $s$ of the elements of $S$. If $S$ is also balanced
with the pebble function $p$ then $s$ is executable from $p$. \end{prop}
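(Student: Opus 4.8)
The plan is to prove the two assertions separately: the first by a topological-sort argument on the transition digraph, the second by induction on $|S|$, leaning on the order-independence of $p_S$ recorded in Section 2.

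For the first assertion I would induct on $|S|$, the empty multiset being vacuous. The inductive step rests on the claim that a nonempty acyclic multiset $S$ has an initial move. To see this, pick any edge of $T(G,S)$ and walk backwards along incoming edges; since $T(G,S)$ is finite and has no directed cycle, the walk cannot revisit a vertex, so it terminates at a vertex $v$ with $d^{-}(v)=0$, and the last edge traversed corresponds to an initial move $s_{1}=(v\a u)\in S$. Removing $s_{1}$ deletes one edge of $T(G,S)$, so $S\setminus\{s_{1}\}$ is again acyclic; by the inductive hypothesis it has a regular ordering $(s_{2},\ldots,s_{n})$, and prepending $s_{1}$ gives a regular ordering of $S$, because $s_{1}$ is an initial move of $S$ and, for $i\ge2$, $s_{i}$ is an initial move of $(S\setminus\{s_{1}\})\setminus\{s_{2},\ldots,s_{i-1}\}=S\setminus\{s_{1},\ldots,s_{i-1}\}$.

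For the second assertion I would induct on $|S|$ again, now assuming in addition that $S$ is balanced with the pebble distribution $p$ and that $s=(s_{1},\ldots,s_{n})$ is a regular ordering of $S$. For $n=0$ there is nothing to check. Otherwise $s_{1}=(v\a u)$ is an initial move, so $d^{-}_{T(G,S)}(v)=0$, and the formula for $p_{S}$ gives $p(v)=p_{S}(v)+\sum\{\omega(vw)\mid(v,w)\in E(T(G,S))\}\ge\omega(vu)$; since $p\ge0$ and the move $(v\a u)$ decreases only the count at $v$, the pebble function $p_{(s_{1})}$ is again nonnegative. Because the resulting pebble function does not depend on the order in which a multiset of moves is applied, $(p_{(s_{1})})_{S\setminus\{s_{1}\}}=p_{S}\ge0$, so $S\setminus\{s_{1}\}$ is balanced with the distribution $p_{(s_{1})}$, and $(s_{2},\ldots,s_{n})$ is a regular ordering of it. The inductive hypothesis then makes $(s_{2},\ldots,s_{n})$ executable from $p_{(s_{1})}$, i.e. $p_{(s_{1},\ldots,s_{i})}=(p_{(s_{1})})_{(s_{2},\ldots,s_{i})}\ge0$ for $2\le i\le n$; together with $p_{(s_{1})}\ge0$ this shows $s$ is executable from $p$.

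The two inductions are routine; the one place that needs a little care is the backward-walk argument producing an initial move, and the observation that balancedness of $S$ with $p$ passes to balancedness of $S\setminus\{s_{1}\}$ with $p_{(s_{1})}$ — the latter being immediate from the order-independence of $p_S$. (Note also that "executable" was defined only for pebble distributions, so the hypothesis on $p$ in the second part should be read as $p\ge0$, which is exactly what the nonnegativity step above uses.)
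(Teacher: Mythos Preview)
Your proof is correct and follows essentially the same route as the paper's: both obtain a regular ordering by repeatedly extracting an initial move from an acyclic multiset (the paper just asserts one exists, while you supply the backward-walk justification), and both show executability by observing that $(p_{(s_{1},\ldots,s_{i-1})})_{S\setminus\{s_{1},\ldots,s_{i-1}\}}=p_{S}\ge0$, so that each $s_{i}$ is an initial move of a multiset balanced with the current distribution. The paper phrases the second part as a direct ``for all $i$'' argument rather than an induction, but the content is the same.
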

\begin{proof}
If $S$ is acyclic then we must have an initial move $t$ of $S$.
Then $S\setminus\{t\}$ is still acyclic. So we can recursively find
the elements of $s$ recursively by picking an initial move $t$ of
$S$ and then replacing $S$ with $S\setminus\{t\}$ at each step.

Now assume that $S$ is balanced with $p$. Then $S_{i}=S\setminus\{s_{1},\ldots,s_{i-1}\}$
is balanced with $p_{(s_{1},\ldots,s_{i-1})}$ for all $i$ since
$(p_{(s_{1},\ldots,s_{i-1})}){}_{S_{i}}=p_{S}\ge0$. Hence the initial
move $s_{i}$ of $S_{i}$ is executable from $p_{(s_{1},\ldots,s_{i-1})}$,
that is, $p_{(s_{1},\ldots,s_{i})}\ge0$ for all $i$. 
\end{proof}
The following result is our main tool.
\begin{thm}
\label{thm:fundam}Let $p$ be a pebble distribution on $G_{\omega}$
and $x\in V(G)$. The following are equivalent.
\begin{enumerate}
\item Vertex $x$ is reachable from $p$. 
\item There is a multiset $S$ of pebbling moves with $p_{S}\ge0$ and $p_{S}(x)\ge1$.
\item There is an acyclic multiset $R$ of pebbling moves with $p_{R}\ge0$
and $p_{R}(x)\ge1$.
\item Vertex $x$ is reachable from $p$ through a regular pebbling sequence.
\end{enumerate}
\end{thm}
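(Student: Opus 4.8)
The plan is to close the cycle of implications $(1)\Rightarrow(2)\Rightarrow(3)\Rightarrow(4)\Rightarrow(1)$, leaning on the two propositions already proved in this section. For $(1)\Rightarrow(2)$ I would take an executable pebbling sequence $s$ with $p_s(x)\ge 1$; executability forces $p_{(s_1,\ldots,s_i)}\ge 0$ for every $i$, in particular $p_s\ge 0$, and if $S$ is the multiset of moves occurring in $s$ then the resulting pebble function is unaffected by the order of execution, so $p_S=p_s$ satisfies both $p_S\ge 0$ and $p_S(x)\ge 1$.

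For $(2)\Rightarrow(3)$ I would apply Proposition \ref{pro:untangling} to $S$, obtaining an acyclic $R\subseteq S$ with $p_R\ge p_S$ for every pebble function on $G$; instantiating this inequality at the given distribution $p$ gives $p_R\ge p_S\ge 0$ and $p_R(x)\ge p_S(x)\ge 1$. For $(3)\Rightarrow(4)$ I would observe that $p_R\ge 0$ says exactly that $R$ is balanced with $p$, and since $R$ is acyclic, Proposition \ref{pro:regular} supplies a regular ordering $s$ of the elements of $R$ that is executable from $p$; the outcome being order-independent, $p_s=p_R$, so $p_s(x)\ge 1$ and $x$ is reachable from $p$ through a regular sequence. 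Finally $(4)\Rightarrow(1)$ is immediate, since a regular pebbling sequence is in particular a pebbling sequence.

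I do not expect a genuine obstacle here: the real content has been packaged into Proposition \ref{pro:untangling} (a No-Cycle type lemma) and the regularization Proposition \ref{pro:regular}. The one point requiring a moment's care is that Proposition \ref{pro:untangling} is stated uniformly over all pebble functions, so one must remember to specialize it to the distribution under consideration; beyond that, the argument is just bookkeeping built on the fact that a multiset of pebbling moves determines the resulting pebble function regardless of the order in which the moves are performed.
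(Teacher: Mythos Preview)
Your proposal is correct and follows essentially the same cycle of implications $(1)\Rightarrow(2)\Rightarrow(3)\Rightarrow(4)\Rightarrow(1)$ as the paper, invoking Proposition~\ref{pro:untangling} for $(2)\Rightarrow(3)$ and Proposition~\ref{pro:regular} for $(3)\Rightarrow(4)$. If anything, you spell out slightly more detail than the paper does (e.g., explicitly noting that $p_R\ge 0$ means $R$ is balanced with $p$, and that order-independence gives $p_s=p_R$), but the argument is the same.
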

\begin{proof}
If $x$ is reachable from $p$ then there is a sequence $s$ of pebbling
moves such that $s$ is executable from $p$ and $p_{s}(x)\ge1$.
If $S$ is the multiset of the moves of $s$ then $p_{S}\ge0$ and
$p_{S}(x)\ge1$ and so (1) implies (2).

By Proposition~\ref{pro:untangling}, (2) implies (3) and by Proposition~\ref{pro:regular},
(3) implies (4). It is clear that (4) implies (1). 
\end{proof}
It is convenient to write the condition $p_{S}\ge0$ and $p_{S}(x)\ge1$
compactly as $p_{S}\ge1_{\{x\}}$ using the indicator function of
the singleton set $\{x\}$.

\section{Cut vertices}

The pebbling number of a graph with a cut vertex often can be calculated
using a simpler graph. This simplification introduces new weights.
The following theorem is the main reason we study weighted graphs. 
\begin{prop}
\label{pro:cut}Let $H$ and $K$ be connected graphs such that $V(H)\cap V(K)=\{v\}$
and $v$ is a cut vertex of $G=H\cup K$. Let $\omega$ be a weight
function on $E(G)$. Assume that $\pi_{t}(K_{\omega},v)=at+b$ for
all $t$. Define a graph $\tilde{G}$ by $V(\tilde{G})=V(H)\dot{\cup}\{u\}$
and $E(\tilde{G})=E(H)\cup\{vu\}$. Define a weight function on $E(\tilde{G})$
by \[
\tilde{\omega}(e)=\begin{cases}
a & \text{if }e=vu\\
\omega(e) & \text{else}\end{cases}.\]
If the goal vertex $x$ is in $V(H)$ then $\pi(G_{\omega},x)=\pi(\tilde{G}_{\tilde{\omega}},x)+b$.
\end{prop}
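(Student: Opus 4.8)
The heart of the argument is a \emph{decomposition lemma}: for any distribution $p$ on $G_\omega$ and any $x\in V(H)$, the vertex $x$ is reachable from $p$ in $G_\omega$ if and only if $x$ is reachable from the distribution $p^{H}$ in $H$ equipped with the restriction of $\omega$, where $p^{H}(w)=p(w)$ for $w\in V(H)\setminus\{v\}$ and $p^{H}(v)=p(v)+c(p)$, and $c(p)$ is the largest $t$ for which $v$ is $t$-reachable in $K_\omega$ from the distribution on $V(K)$ that agrees with $p$ on $V(K)\setminus\{v\}$ and is $0$ at $v$. To prove this I would use Theorem~\ref{thm:fundam} together with its evident $t$-reachable analogue (proved verbatim from Propositions~\ref{pro:untangling} and~\ref{pro:regular}): take an acyclic multiset $R$ with $p_{R}\ge 1_{\{x\}}$ and split it as $R=R_{H}\sqcup R_{K}$ according to whether a move runs along an edge of $H$ or of $K$; this is well defined since $v$ is the only shared vertex, and any directed cycle of $T(G,R)$ lies entirely in $T(H,R_{H})$ or in $T(K,R_{K})$, so both parts are acyclic. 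Writing $\delta_{H}$ and $\delta_{K}$ for the in-degree minus out-weight at $v$ contributed by $R_{H}$ and $R_{K}$, one checks that $R_{K}$ is balanced with the $K$-part of $p$ (at vertices of $V(K)\setminus\{v\}$ this is just $p_{R}\ge 0$, at $v$ it is automatic), so $v$ is $\delta_{K}$-reachable there when $\delta_{K}\ge 0$ and hence $c(p)\ge\delta_{K}$ in all cases; it follows that $R_{H}$ is balanced with $p^{H}$ and $(p^{H})_{R_{H}}\ge 1_{\{x\}}$. Conversely one glues a $K$-strategy realizing $c(p)$ pebbles at $v$ in front of an $H$-strategy witnessing reachability from $p^{H}$. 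I would also record the trivial monotonicity fact, proved the same way from Theorem~\ref{thm:fundam}(3), that increasing the number of pebbles at any vertex cannot destroy reachability. Applying the lemma to $\tilde G$, where $K$ is replaced by the single pendant edge $vu$ of weight $a$, gives the same statement with $c(\tilde p)=\lfloor \tilde p(u)/a\rfloor$; and since $u$ is a pendant vertex, a distribution on $V(H)$ has the same reachability behaviour in $H_{\omega}$ and in $\tilde G_{\tilde\omega}$, so $\pi(H_{\omega},x)\le\pi(\tilde G_{\tilde\omega},x)$.

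Next I would convert the hypothesis $\pi_{t}(K_{\omega},v)=at+b$ into two facts about distributions supported on $V(K)\setminus\{v\}$: \textbf{(i)} any such distribution of size $s$ has $c(\cdot)\ge\lfloor (s-b)/a\rfloor$, because a distribution of size at least $at+b$ forces $t$-reachability of $v$; and \textbf{(ii)} for every $T\ge 1$ there is such a distribution of size $aT+b-1$ from which $v$ is not $T$-reachable. The hypothesis directly yields a distribution of size $aT+b-1$ on $K$ from which $v$ is not $T$-reachable, and the only point in (ii) is that it can be taken without pebbles on $v$. If such a distribution has $j\ge 1$ pebbles at $v$, then $j\le T-1$ and, after zeroing $v$, the remainder lies on $V(K)\setminus\{v\}$, has size $aT+b-1-j$, and (again by the $t$-reachable analogue of Theorem~\ref{thm:fundam}) makes $v$ not $(T-j)$-reachable; hence $a(T-j)+b=\pi_{T-j}(K_{\omega},v)\ge aT+b-j$, which forces $a\le 1$. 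So when $a\ge 2$ every extremal distribution already avoids $v$, and when $a=1$ one takes the zeroed remainder, pads it back up on $V(K)\setminus\{v\}$, and uses the basic estimate that adding one pebble raises the $t$-reachability count of $v$ by at most one.

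With the decomposition lemma and (i),(ii) in hand, both inequalities of Proposition~\ref{pro:cut} reduce to counting. For $\pi(G_{\omega},x)\le\pi(\tilde G_{\tilde\omega},x)+b$: given $p$ on $G$ of size $\pi(\tilde G_{\tilde\omega},x)+b$ with $\beta$ pebbles in $V(K)\setminus\{v\}$, replace those pebbles by $\max(0,\beta-b)$ pebbles at $u$ to obtain $\tilde p$ on $\tilde G$; by (i), $\lfloor\tilde p(u)/a\rfloor\le c(p)$, so $p^{H}\ge\tilde p^{H}$ pointwise, and $\|\tilde p\|\le\pi(\tilde G_{\tilde\omega},x)$ when $\beta\ge b$ (when $\beta<b$ the part of $p$ on $V(H)$ already exceeds $\pi(H_{\omega},x)$ pebbles); reachability of $x$ from $\tilde p$, the two decomposition lemmas, and monotonicity then give reachability of $x$ from $p$. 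For $\pi(G_{\omega},x)\ge\pi(\tilde G_{\tilde\omega},x)+b$: take $\tilde p$ on $\tilde G$ of size $\pi(\tilde G_{\tilde\omega},x)-1$ with $x$ not reachable, put $k=\tilde p(u)$ and $T=\lfloor k/a\rfloor+1$, keep $\tilde p$ unchanged on $V(H)$, and place on $V(K)\setminus\{v\}$ a distribution from (ii), trimmed to $k+b$ pebbles (note $aT+b-1\ge k+b$), from which $v$ is not $T$-reachable; then $c(p)\le\lfloor k/a\rfloor$, so $p^{H}\le\tilde p^{H}$, so $x$ is not reachable from $p^{H}$ in $H_{\omega}$, hence not from $p$ in $G_{\omega}$, while $\|p\|=\pi(\tilde G_{\tilde\omega},x)+b-1$.

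The main obstacle is the decomposition lemma together with part (ii): making precise that the subgraph $K$ influences reachability only through the single number $c(p)$, and that an adversary can realize the extremal $K$-distribution without squandering pebbles at the cut vertex. Once those two points are established, Proposition~\ref{pro:cut} follows by routine arithmetic as above.
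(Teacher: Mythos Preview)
Your approach is correct and close in spirit to the paper's, but you have reorganised the argument around an explicit \emph{decomposition lemma} that the paper leaves implicit. The paper proves each inequality by directly transporting strategies between $G_\omega$ and $\tilde G_{\tilde\omega}$: for $\ge$ it places $p(u)+b$ ``green'' pebbles on $K$ (allowing some on $v$) so that the green part realises exactly $\lfloor p(u)/a\rfloor$ pebbles at $v$, then prunes from $S_K$ all maximal walks in $T(K,S_K)$ leaving $v$ to show that red pebbles sent into $K$ are wasted; for $\le$ it performs the analogous translation in the other direction. Your route instead isolates the single invariant $c(p)$ and reduces both inequalities to a pointwise comparison of $p^{H}$ with $\tilde p^{H}$, which is cleaner and reusable. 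The price you pay is point~(ii): because your $c(p)$ is computed from the $K$-distribution with $v$ zeroed, you must argue that an extremal $K$-distribution can be taken off $v$, forcing your $a\ge 2$ versus $a=1$ case split and the ``one extra pebble raises $t$-reachability by at most one'' estimate (which does hold, via removing a maximal directed path from an acyclic witnessing multiset). The paper sidesteps this entirely by allowing green pebbles on $v$ and compensating with the walk-pruning step; conversely, the paper's walk-pruning and its phrase ``cannot move more than $\lfloor p(u)/a\rfloor$ pebbles to $v$'' hide precisely the balance bookkeeping that your decomposition lemma makes explicit. One small slip: in your $\le$ paragraph you write $\|\tilde p\|\le\pi(\tilde G_{\tilde\omega},x)$, but what you need (and in fact have, by your own count) is $\|\tilde p\|\ge\pi(\tilde G_{\tilde\omega},x)$, since that is the direction that forces $x$ to be reachable from $\tilde p$.
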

\begin{figure}
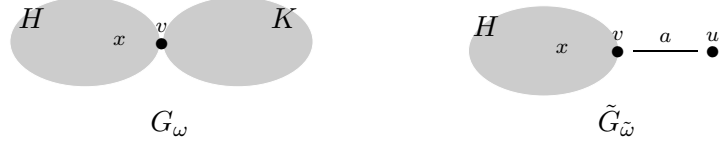

\begin{tabular}{cccccccccccc}
\input{cut1.tex} &  &  &  &  &  &  &  &  &  &  & \input{cut2.tex}\tabularnewline
$G_{\omega}$ &  &  &  &  &  &  &  &  &  &  & $\tilde{G}_{\tilde{\omega}}$\tabularnewline
\end{tabular}

\caption{\label{cap:cut}Simplification using the cut vertex $v$. If $\pi_{t}(K_{\omega},v)=at+b$
then $\pi(G_{\omega},x)=\pi(\tilde{G}_{\tilde{\omega}},x)+b$.}

\end{figure}

To simplify notation, we used $K_{\omega}$ instead of the more precise
$K_{\omega|E(K)}$ even though $\omega$ is defined on values outside
of $E(K)$. 
\begin{proof}
The graphs are visualized in Figure~\ref{cap:cut}. First we show
that $\pi(G_{\omega},x)\ge\pi(\tilde{G}_{\tilde{\omega}},x)+b$. Let
$p$ be a pebble distribution on $\tilde{G}$ with $\|p\|=\pi(G_{\omega},x)-b$.
We create a new distribution $q$ on $G$ consisting of red and green
pebbles. The red pebbles are placed on $H$ exactly the same way as
the pebbles in $p$ are placed on $H$. The number of green pebbles
is $p(u)+b$. The green pebbles are placed on $K$ so that the number
of pebbles that can be moved to $v$ using only green pebbles is minimum.
This minimum number is clearly $\lfloor p(u)/a\rfloor$. Note that
$q$ can have both red and green pebbles on $v$. Then $\|q\|=\pi(G_{\omega},x)$,
so there is an acyclic multiset $S$ of pebbling moves on $G_{\omega}$
such that $q_{S}\ge1_{\{x\}}$. Let $S_{H}$ and $S_{K}$ contain
the moves of $S$ inside $H$ and $K$ respectively so that $S=S_{H}\dot{\cup}S_{K}$. 

We are going to see that moving red pebbles from $H$ to $K$ is not
beneficial and so these moves can be eliminated. Since $S$ is acyclic,
any maximal walk in $T(K,S_{K})$ starting at $v$ is actually a path.
Let us remove the pebbling moves corresponding to such maximal walks
from $S_{K}$ until we eliminate all walks from $T(K,S_{K})$ starting
at $v$. The choice of these maximal walks is not unique and they
can overlap, so we need to eliminate them one by one. The resulting
multiset $S'_{K}\subseteq S_{K}$ is balanced with $q$ and $q_{S_{K}}(v)\le q_{S'_{K}}(v)$. 

Executing $S'_{K}$ from $q$ cannot move more than $\lfloor p(u)/a\rfloor$
pebbles to $v$ since $S'_{K}$ does not have any effect on the red
pebbles. Let $R$ be the multiset containing the elements of $S_{H}$
together with $\lfloor\|p(u)\|/a\rfloor$ copies of the move $(u\a v)$.
Then it is clear that $p_{R}(u)\ge0$ and $p_{R}$ is not smaller
than $q_{S}$ on $H$, hence $p_{R}\ge1_{\{x\}}$.

Now we show that $\pi(G_{\omega},x)\le\pi(\tilde{G}_{\tilde{\omega}},x)+b$.
Let $p$ be a pebble distribution on $G$ with $\|p\|=\pi(\tilde{G}_{\tilde{\omega}},x)+b$.
Let $c=\|p|_{V(K)\setminus\{v\}}\|$ be the number of pebbles in this
distribution on $V(K)\setminus\{v\}$. We create a new distribution
$q$ on $\tilde{G}$ such that $q$ and $p$ are the same on $V(H)$
and $q(u)=\max\{0,c-b\}$. Then $\|q\|\ge\pi(\tilde{G}_{\tilde{\omega}},x)$
so there is an acyclic multiset $S$ of pebbling moves on $\tilde{G}_{\tilde{\omega}}$
such that $q_{S}\ge1_{\{x\}}$. There is a multiset $R_{1}$ of pebbling
moves on $K$ using only the pebbles on $V(K)\setminus\{v\}$ such
that $p_{R_{1}}(v)=p(v)+\lfloor\max\{0,c-b\}/a\rfloor$. Let $R$
be the multiset containing the elements of $R_{1}$ together with
the elements of $S$ different from $(u\a v)$. Then $p_{R}$ is not
smaller than $q_{S}$ on $H$ and $p_{R}$ is nonnegative on $V(K)\setminus\{v\}$,
hence $p_{R}\ge1_{\{x\}}$.\end{proof}
\begin{note}
\label{not:t-results}The previous proposition is applicable in many
situations since the function $t\mapsto\pi_{t}(G,x)$ is often linear;
for example for trees, complete graphs and hypercubes. In particular
it is linear for cycles \cite{tpebbling} where $\pi_{t}(C_{2n})=t2^{n}$
and $\pi_{t}(C_{2n+1})=1+(t-1)2^{n}+2\left\lfloor \frac{2^{n+1}}{3}\right\rfloor $. 

The simplest nonlinear example is the wheel graph $W_{5}$ with 5
vertices. If $x$ is a degree 3 vertex then\[
\pi_{t}(W_{5},x)=\begin{cases}
5 & \text{if }t=1\\
4t & \text{if }t\ge2\end{cases}.\]
If $G$ is the complete graph with 7 vertices with one missing edge
$xy$ then \[
\pi_{t}(G,x)=\begin{cases}
2t+5 & \text{if }t\in\{1,2\}\\
4t & \text{if }t\ge3\end{cases}.\]

\end{note}

\section{\label{sec:Simp-leaves}Simplifications using leaves}
\begin{prop}
\label{pro:leaves1}Let $G_{\omega}$ be a weighted star graph with
center $x$ and spikes $xv_{1},\ldots,xv_{n}$. Suppose that $a=\omega(xv_{1})$
is the maximum value of $\omega$. Then $\pi_{t}(G_{\omega},x)=ta+\sum_{i=2}^{n}(\omega(xv_{i})-1)$.\end{prop}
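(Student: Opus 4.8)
The plan is to reduce the whole question to one combinatorial quantity. For a distribution $p$ on the star set $N(p)=p(x)+\sum_{i=1}^{n}\lfloor p(v_{i})/\omega(xv_{i})\rfloor$; I claim that $x$ is $t$-reachable from $p$ exactly when $N(p)\ge t$. The easy direction is that, for each $i$, the $\lfloor p(v_{i})/\omega(xv_{i})\rfloor$ consecutive moves $(v_{i}\a x)$ are executable and, carried out for every $i$, deposit $N(p)$ pebbles on $x$; so $N(p)\ge t$ suffices. For the converse, suppose $s$ is executable from $p$ with move multiset $S$; then $p_{S}=p_{s}\ge0$, and by Proposition~\ref{pro:untangling} there is an acyclic $R\subseteq S$ with $p_{R}\ge p_{S}$. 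The only directed cycles in the transition digraph of a star are the two-cycles $x\leftrightarrow v_{i}$, so acyclicity of $R$ means that for each $i$ at most one of $(x\a v_{i})$, $(v_{i}\a x)$ lies in $R$; a short count, splitting into the three cases ``an outward move at $v_{i}$ occurs'', ``an inward move at $v_{i}$ occurs'', ``neither'', shows that the number $b_{i}$ of copies of $(v_{i}\a x)$ in $R$ satisfies $b_{i}\le\lfloor p(v_{i})/\omega(xv_{i})\rfloor$, whence $p_{s}(x)\le p_{R}(x)\le N(p)$.

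Granting the claim, the lower bound follows from the distribution $p$ with $p(v_{1})=ta-1$, $p(v_{i})=\omega(xv_{i})-1$ for $2\le i\le n$, and $p(x)=0$: it has size $ta+\sum_{i=2}^{n}(\omega(xv_{i})-1)-1$ and $N(p)=\lfloor(ta-1)/a\rfloor=t-1<t$, so $x$ is not $t$-reachable from it, and therefore $\pi_{t}(G_{\omega},x)\ge ta+\sum_{i=2}^{n}(\omega(xv_{i})-1)$.

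For the matching upper bound I must show $N(p)\ge t$ for every $p$ with $\|p\|=ta+\sum_{i=2}^{n}(\omega(xv_{i})-1)$ (distributions of larger size reduce to this). The key estimate is the remainder inequality $\omega\lfloor m/\omega\rfloor\ge m-\omega+1$ for $m\ge0$, which, together with $\omega(xv_{i})\le a$ and $\lfloor m/\omega\rfloor\ge0$, gives $a\lfloor p(v_{i})/\omega(xv_{i})\rfloor\ge p(v_{i})-\omega(xv_{i})+1$ for every $i$, while trivially $a\,p(x)\ge p(x)$. Summing over the center and all leaves and using $\omega(xv_{1})=a$ yields $aN(p)\ge\|p\|-\sum_{i=1}^{n}(\omega(xv_{i})-1)=ta-a+1$, so $N(p)\ge t-1+\frac1a>t-1$, forcing $N(p)\ge t$. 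Together with the lower bound this gives the stated value.

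I expect the first paragraph — pinning down $N(p)$ as the exact $t$-reachability threshold — to be the only place needing care, since one must check that discarding the moves $(x\a v_{i})$ never destroys nonnegativity at the leaves; the remaining steps are routine arithmetic with floor functions.
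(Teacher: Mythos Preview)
Your proof is correct. Both you and the paper rest on the same underlying fact, namely that from a distribution $p$ the number of pebbles deliverable to the center is exactly $N(p)=p(x)+\sum_i\lfloor p(v_i)/\omega(xv_i)\rfloor$; the paper uses this implicitly (``the maximum number of pebbles we can place on $v_i$ so that at most $t_i$ pebbles can be moved from $v_i$ to $x$ is $(t_i+1)\omega(xv_i)-1$''), while you state and prove it carefully via Proposition~\ref{pro:untangling}. Your concern in the last paragraph is unfounded: nonnegativity at the leaves is guaranteed by $p_R\ge p_S\ge0$, so no extra check is needed.

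Where the two arguments genuinely diverge is in the upper bound. The paper computes $\max\{\|p\|:N(p)\le t-1\}$ directly, writing $\lfloor p(v_i)/\omega(xv_i)\rfloor=t_i$, observing that the extremal $p$ has $p(x)=0$ and $p(v_i)=(t_i+1)\omega(xv_i)-1$, and then maximizing $\sum t_i\omega(xv_i)$ over $\sum t_i=t-1$ by putting all of the budget on the heaviest edge. You instead prove the linear inequality $aN(p)\ge\|p\|-\sum_i(\omega(xv_i)-1)$ and read off the bound. Your route is slicker in that it avoids any case analysis of where the maximum is attained; the paper's route has the minor advantage of exhibiting \emph{all} extremal insufficient distributions rather than just one.
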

\begin{proof}
The maximum number of pebbles we can place on $v_{i}$ so that at
most $t_{i}$ pebbles can be moved from $v_{i}$ to $x$ is $(t_{i}+1)\omega(xv_{i})-1$.
So\begin{align*}
\pi_{t}(G_{\omega},x) & =\max\{\sum_{i=1}^{n}((t_{i}+1)\omega(xv_{i})-1)\mid t_{1}+\cdots+t_{n}<t\}+1\\
 & =(t-1+1)\omega(xv_{1})-1+\sum_{i=2}^{n}((0+1)\omega(xv_{i})-1)+1\\
 & =ta+\sum_{i=2}^{n}(\omega(xv_{i})-1)\end{align*}
since the maximum is taken when $t_{1}=t-1$ and $t_{2}=\cdots=t_{n}=0$.
\end{proof}
The reader can easily verify the following result.
\begin{prop}
\label{pro:leaves2}Let $x$, $v_{1}$ and $v_{2}$ be the consecutive
vertices of the graph $G=P_{3}$ with weight function $\omega$. Then
$\pi_{t}(G_{\omega},x)=t\omega(xv_{1})\omega(v_{1}v_{2})$.
\end{prop}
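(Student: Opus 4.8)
The plan is to determine, for an arbitrary distribution $p$ on $G_{\omega}$, the largest number of pebbles that can be gathered at $x$, and then to maximize $\|p\|$ subject to that number being at most $t-1$. Write $a=\omega(xv_{1})$, $b=\omega(v_{1}v_{2})$ and record $p$ as the triple $(p(x),p(v_{1}),p(v_{2}))$. By Propositions~\ref{pro:untangling} and~\ref{pro:regular}, $x$ is $t$-reachable from $p$ if and only if there is an acyclic multiset $S$ of moves with $p_{S}\ge0$ and $p_{S}(x)\ge t$, so the largest $t$ for which $x$ is $t$-reachable equals $\max\{p_{S}(x)\mid S\text{ acyclic},\ p_{S}\ge0\}$. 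I would parametrize such an $S$ on $P_{3}$ by the multiplicities of the four possible moves; acyclicity forbids using any edge in both directions, and a short case analysis of the inequalities $p_{S}\ge0$ shows that in every case \[ p_{S}(x)\le N(p):=p(x)+\left\lfloor \frac{p(v_{1})+\lfloor p(v_{2})/b\rfloor}{a}\right\rfloor, \] with equality when $S$ consists of $\lfloor p(v_{2})/b\rfloor$ copies of $(v_{2}\a v_{1})$ followed by $\lfloor(p(v_{1})+\lfloor p(v_{2})/b\rfloor)/a\rfloor$ copies of $(v_{1}\a x)$. Hence $x$ is $t$-reachable from $p$ exactly when $N(p)\ge t$, and $\pi_{t}(G_{\omega},x)=1+\max\{\|p\|\mid N(p)\le t-1\}$.

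For the lower bound I would take $p=(0,0,tab-1)$: then $\lfloor p(v_{2})/b\rfloor=ta-1$ and $N(p)=\lfloor(ta-1)/a\rfloor=t-1$, so this $p$ witnesses $\pi_{t}(G_{\omega},x)\ge tab$. For the matching upper bound I would estimate $N(p)$ from below by applying $\lfloor y/m\rfloor\ge(y-m+1)/m$ twice, obtaining \[ N(p)\ge p(x)+\frac{p(v_{1})}{a}+\frac{p(v_{2})}{ab}+\frac{1}{ab}-1>\frac{\|p\|}{ab}-1 \] (using $ab\ge1$ and $b\ge1$). Thus $\|p\|\ge tab$ forces the integer $N(p)$ to be at least $t$; equivalently $N(p)\le t-1$ forces $\|p\|\le tab-1$. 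Combining this with the lower bound gives $\pi_{t}(G_{\omega},x)=tab=t\,\omega(xv_{1})\omega(v_{1}v_{2})$.

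The main obstacle is the bookkeeping in the first step — confirming that the two moves pointing away from $x$ can be given multiplicity zero in an optimal $S$, so that the optimum is governed only by the inequalities $bk\le p(v_{2})$ and $aj\le p(v_{1})+k$ for the numbers $k$ of copies of $(v_{2}\a v_{1})$ and $j$ of copies of $(v_{1}\a x)$. Once $N(p)$ is in closed form the rest is routine arithmetic with floors; alternatively, the whole computation can be carried out directly in the style of Proposition~\ref{pro:leaves1}, by counting the most pebbles one may place on $v_{2}$ (and then on $v_{1}$ and $x$) while keeping at most $t-1$ pebbles reachable at $x$.
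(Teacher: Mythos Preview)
Your argument is correct. The paper itself offers no proof of this proposition---it simply says ``the reader can easily verify the following result''---so there is nothing to compare against in detail. Your derivation of the closed form $N(p)=p(x)+\lfloor(p(v_{1})+\lfloor p(v_{2})/b\rfloor)/a\rfloor$ via the acyclic reduction is sound (on $P_{3}$ the only cycles in the transition digraph are $2$-cycles, so acyclicity indeed forces each edge to carry moves in at most one direction, and your case split then goes through), and the floor estimate for the upper bound is clean.

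If anything, your write-up is more elaborate than the paper's ``easily verify'' suggests. The alternative you mention at the very end---arguing directly in the style of Proposition~\ref{pro:leaves1} by counting the most pebbles one can place on $v_{2}$, then $v_{1}$, then $x$ while keeping at most $t-1$ pebbles reachable at $x$---is shorter and is almost certainly what the author had in mind: with $k$ pebbles on $v_{2}$ one can push $\lfloor k/b\rfloor$ to $v_{1}$, and then $\lfloor(p(v_{1})+\lfloor k/b\rfloor)/a\rfloor$ to $x$, so the worst insufficient distribution piles everything on $v_{2}$ with $tab-1$ pebbles. Either route is fine.
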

\begin{figure}
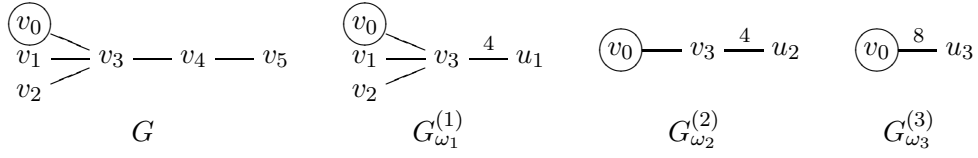

\begin{tabular}{cccc}
\input{leavesEx1.tex} & \input{leavesEx2.tex} & \input{leavesEx3.tex} & \input{leavesEx4.tex}\tabularnewline
$G$ & $G_{\omega_{1}}^{(1)}$ & $G_{\omega_{2}}^{(2)}$ & $G_{\omega_{3}}^{(3)}$\tabularnewline
\end{tabular}

\caption{\label{cap:leaves_ex}Simplification of a tree: $\pi(G,v_{0})=\pi(G_{\omega_{1}}^{(1)},v_{0})=\pi(G_{\omega_{2}}^{(2)},v_{0})+2=\pi(G_{\omega_{3}}^{(3)},v_{0})+2=10$.}

\end{figure}

Propositions~\ref{pro:cut}, \ref{pro:leaves1} and \ref{pro:leaves2}
allow us to calculate the pebbling number of every weighted tree since
we can simplify the tree to a single edge. The process is shown in
the next example.
\begin{example}
Figure~\ref{cap:leaves_ex} shows the stages of the simplification
of a tree. First we let $K$ be the subgraph of $G$ generated by
$\{v_{3},v_{4},v_{5}\}$. Then $\pi_{t}(K,v_{3})=4t$ by Proposition~\ref{pro:leaves2}
so we replace $K$ by the weighted edge $v_{3}u_{1}$ to get $G_{\omega_{1}}^{(1)}$.
Next we let $K$ be the subgraph of $G_{\omega_{1}}^{(1)}$ generated
by $\{v_{1},v_{2},u_{1}\}$. Then $\pi_{t}(K,v_{3})=4t+2$ by Proposition~\ref{pro:leaves1}
so we replace $K$ by the weighted edge $v_{3}u_{2}$ to get $G_{\omega_{2}}^{(2)}$.
Finally we use Proposition~\ref{pro:leaves2} again to get $G_{\omega_{3}}^{(3)}$.
\end{example}

\section{Simplification using ears}

In this section we use the existence of special paths in our graph
to simplify the calculation of the pebbling number. A \emph{thread}
of a graph is a path containing vertices of degree 2.
\begin{defn}
Let $x$ be a goal vertex in $G$. Let $v_{1},\ldots,v_{n}$ be the
consecutive vertices of a maximal thread $T$ not containing $x$.
There are unique vertices $v_{0}$ and $v_{n+1}$ outside of $T$
that are adjacent to $v_{1}$ and $v_{n}$ respectively. The subgraph
$E$ induced by $v_{0},\ldots,v_{n+1}$ is called an \emph{ear}. The
vertices of $T$ are called the \emph{inner vertices} of $E$. If
$v_{0}=v_{n+1}$ then $E$ is called a \emph{closed ear}. If the vertices
of $T$ are cut vertices then $E$ is called a \emph{cut ear}. If
$E$ is neither a closed ear nor a cut ear then it is called an \emph{open
ear}.
\end{defn}
Note that an ear has at least two edges. Also note that the goal vertex
can be an end vertex of an ear.

\subsection{\label{sub:Closed-ears}Closed ears}

If a closed ear has default weights then it can be replaced by a weighted
edge using Corollary~\ref{pro:cut} and Note~\ref{not:t-results}.
The simplification is shown in Figure~\ref{cap:closedear}. If the
closed ear has $2n$ vertices then $\pi(G_{\omega},x)=\pi(\tilde{G}_{\tilde{\omega}},x)$.
If the closed ear has $2n+1$ vertices then $\pi(G_{\omega},x)=\pi(\tilde{G}_{\tilde{\omega}},x)+1-2^{n}+2\left\lfloor \frac{2^{n+1}}{3}\right\rfloor $.
The edge weight is $a=2^{n}$ in both cases. 

\begin{figure}
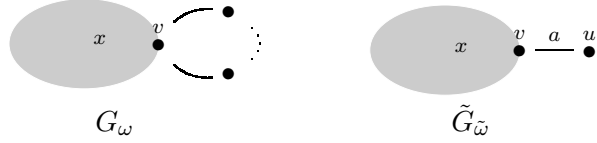

\begin{tabular}{ccccccccccccc}
\input{ear1.tex} &  &  &  &  &  &  &  &  &  &  &  & \input{ear1a.tex}\tabularnewline
$G_{\omega}$ &  &  &  &  &  &  &  &  &  &  &  & $\tilde{G}_{\tilde{\omega}}$\tabularnewline
\end{tabular}

\caption{\label{cap:closedear}Substitution for a closed ear. The edge weight
$a$ is $2^{\lfloor\frac{k}{2}\rfloor}$ where $k$ is the number
of vertices of the closed ear.}

\end{figure}

\subsection{\label{sub:Cut-ears}Cut ears}

Cut ears can be replaced by weighted edges as well. First we need
the following result. 
\begin{lem}
\label{lem:noinner}Let $p$ be a maximum size pebble distribution
from which the goal vertex $x$ is not reachable. Then $p$ has no
pebbles on the inner vertices of a cut ear.\end{lem}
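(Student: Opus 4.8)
The plan is to argue by contradiction, assuming that a maximum-size $x$-unreachable distribution $p$ places a pebble somewhere on the inner thread $T$ of a cut ear $E$ with end vertices $v_0$ and $v_{n+1}$. The key structural fact to exploit is that every inner vertex $v_i$ is a cut vertex, so removing $v_i$ disconnects $G$; in particular one side of $v_i$ contains $x$ and (say) $v_0$, and the other side contains $v_{n+1}$. First I would set up notation: let $G = A \cup B$ where $A$ and $B$ are the two components created by the cut ear, with $x \in V(A)$, so that $T$ together with $v_0 \in V(A)$ and $v_{n+1} \in V(B)$ is all the ``bridge'' between them. The aim is to produce a strictly larger distribution $p'$ that is still $x$-unreachable, contradicting maximality.

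The core idea is a pebble-shifting argument along the thread. Pick $v_i \in \mathrm{supp}(p) \cap V(T)$; I would show that one can move the pebbles off $v_i$ toward $v_0$ (i.e., toward the $x$-side) without ever helping $x$ become reachable, while the move of ``rebuilding'' pebbles on the far side lets us add pebbles and stay unreachable. Concretely, I expect the cleanest route is: replace the $p(v_i)$ pebbles at $v_i$ by $0$ pebbles at $v_i$ and instead add pebbles at $v_{n+1}$ (or redistribute within $B$), choosing the amounts so that the number of pebbles that can ever reach $v_0$ from the $B$-side-plus-thread does not increase, yet the total size $\|p'\|$ strictly increases. Here Proposition~\ref{pro:cut} (or at least its proof technique) and the fact that $\pi_t$ for a path is multiplicative in the edge weights (Proposition~\ref{pro:leaves2}) tell us exactly how many pebbles on the thread are ``worth'' a pebble delivered to $v_0$: a pebble on $v_i$ is worth a $1/(\text{product of weights from } v_i \text{ to } v_0)$ fraction of a delivered pebble, which is at most $1/2$ since an ear has at least two edges. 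Trading an inner pebble for pebbles stored deeper in $B$ — where the ``exchange rate'' to $v_0$ is no better — can only lose delivery capacity toward $x$, so reachability of $x$ is preserved; but because the rates are strict, the swap frees up room to add at least one extra pebble, giving $\|p'\| > \|p\|$.

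The main obstacle I anticipate is making the ``does not help $x$'' claim rigorous. Reachability of $x$ from $p$ is not simply a function of how many pebbles reach $v_0$ — a pebbling strategy could conceivably route pebbles from $B$ through the thread to $v_0$ and combine them with pebbles already in $A$ in a subtle way. The right tool is Theorem~\ref{thm:fundam}: $x$ is reachable from $p$ iff there is an acyclic multiset $S$ with $p_S \ge 1_{\{x\}}$. Since every $v_i$ is a cut vertex, any such $S$, when restricted to the thread, is (after discarding cycles, using Proposition~\ref{pro:untangling}) a collection of directed paths along $T$, and because $x \in V(A)$ the useful ones run from $v_{n+1}$ toward $v_0$. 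So the entire contribution of $B$ and $T$ to the ``$A$-side'' of any successful strategy is captured by a single number: how many pebbles get deposited at $v_0$. Once this reduction is in hand, the problem collapses to the star/path computations of Propositions~\ref{pro:leaves1} and~\ref{pro:leaves2}, and the pebble-shifting arithmetic is routine. I would therefore structure the proof as: (i) reduce via Theorem~\ref{thm:fundam} and Proposition~\ref{pro:untangling} to counting pebbles delivered to $v_0$; (ii) observe the inner-vertex exchange rate to $v_0$ is at most $1/2$; (iii) perform the swap to build a strictly larger unreachable $p'$; (iv) conclude by contradiction with maximality of $p$.
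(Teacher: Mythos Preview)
Your contradiction strategy and the central structural observation---that every inner vertex is a cut vertex, so pebbles on the far side can reach $x$ only by crossing it---are exactly what the paper uses. But your execution is far more elaborate than needed, and one sentence points the wrong way: you write ``move the pebbles off $v_i$ toward $v_0$ (i.e., toward the $x$-side)'', whereas pushing toward $x$ can only \emph{help} $x$; the correct direction, which your concrete swap then uses, is away from $x$.

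The paper dispenses entirely with the bulk swap to $v_{n+1}$, the exchange-rate arithmetic, the reduction to ``delivery capacity at $v_0$'', and every appeal to Propositions~\ref{pro:cut}, \ref{pro:leaves1}, \ref{pro:leaves2}. It applies a \emph{single} inverse pebbling move. Take any inner vertex $u$ with $p(u)>0$, let $K$ be the component of $G\setminus\{u\}$ not containing $x$, let $v$ be the unique neighbour of $u$ in $K$, and set $q=p_{(v\to u)^{-1}}$; then $\|q\|>\|p\|$. If some acyclic multiset $S$ satisfies $q_S\ge 1_{\{x\}}$, there are just two cases. If $(v\to u)\in S$, then $p_{S\setminus\{(v\to u)\}}=q_S\ge 1_{\{x\}}$. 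If $(v\to u)\notin S$, then no move of $S$ carries a pebble from $K$ into $u$, so stripping from $S$ every move that touches a vertex of $K$ yields a multiset $R$ with $p_R\ge 1_{\{x\}}$: on $H$ nothing changed, on $K$ we are back to $p$, and at $u$ we recover the removed pebble. Either way $x$ is reachable from $p$, a contradiction.

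Your route could be completed, but note that the ``delivery capacity at $v_0$'' reduction itself needs essentially this two-case analysis at a cut vertex to be made rigorous, and a swap all the way to $v_{n+1}$ must contend with the added pebbles combining nontrivially with pebbles already sitting in $B$ (since $v_{n+1}$ need not be a cut vertex). Moving just one step, to the adjacent vertex on the far side, avoids all of that and is the whole proof.
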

\begin{proof}
Suppose $u$ is an inner vertex of the cut ear $E$ and $p(u)>0$.
Let $H$ and $K$ be the connected components of $G\setminus\{u\}$
such that $x\in H$. There is a unique vertex $v\in K$ that is adjacent
to $u$. The size of $q=p_{(v\a u)^{-1}}$ is larger than the size
of $p$. We show that $x$ is not reachable from $q$ which is a contradiction. 

Suppose there is an acyclic multiset $S$ of pebbling moves with $q_{S}\ge1_{\{x\}}$.
If $(v\a u)\in S$ then with $R=S\setminus\{(v\a u)\}$ we have $p_{R}=q_{S}\ge1_{\{x\}}$
which is not possible. So we can assume that $(v\a u)\not\in S$.
Let $R$ contain those moves of $S$ that do not involve any vertex
in $K$. Then $p_{R}(u)\ge q_{S}(u)+1$, $p_{R}(w)=q_{S}(w)$ for
$w\in V(H)$ and $p_{R}(w)=p(w)$ for $w\in V(K)$. So $p_{R}\ge1_{\{x\}}$
which is again impossible.
\end{proof}
\begin{figure}
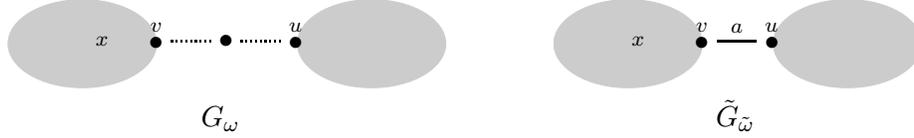

\begin{tabular}{ccccccccccccccccccc}
\input{ear2.tex} &  &  &  &  &  &  &  &  &  &  &  &  &  &  &  &  &  & \input{ear2a.tex}\tabularnewline
$G_{\omega}$ &  &  &  &  &  &  &  &  &  &  &  &  &  &  &  &  &  & $\tilde{G}_{\tilde{\omega}}$\tabularnewline
\end{tabular}

\caption{\label{cap:cutear}Substitution for a cut ear. The edge weight is
$\tilde{\omega}(vu)=a=2^{n-1}$ where $n$ is the number vertices
of the path connecting $v$ to $u$ in $G$. }

\end{figure}

\begin{prop}
\label{pro:cut-ear}Let $E$ be a cut ear of $G_{\omega}$ with end
vertices $v$ and $u$. Let $\tilde{G}$ be the graph created from
$G$ by removing the inner vertices of $E$ and adding the edge $vu$.
Define $\tilde{\omega}$ on $E(\tilde{G})$ by \[
\tilde{\omega}(e)=\begin{cases}
a & \text{if }e=vu\\
\omega(e) & \text{else}\end{cases}\]
where $a$ is the product of the weights of the edges of $E$. If
the goal vertex $x$ is not an inner vertex of $E$ then $\pi(G_{\omega},x)=\pi(\tilde{G}_{\tilde{\omega}},x)$.\end{prop}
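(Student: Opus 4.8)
The plan is to prove the two inequalities $\pi(G_\omega,x)\le\pi(\tilde G_{\tilde\omega},x)$ and $\pi(G_\omega,x)\ge\pi(\tilde G_{\tilde\omega},x)$ separately, using Lemma~\ref{lem:noinner} to handle the subtle direction and a direct simulation argument for the other. Throughout I will exploit the structure that the inner vertices of a cut ear form a thread of degree-$2$ vertices, so that any pebbling move touching an inner vertex goes either toward $v$ or toward $u$ along the path, and the only way pebbles enter or leave the ``ear side'' is through $v$ or $u$.

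For $\pi(G_\omega,x)\ge\pi(\tilde G_{\tilde\omega},x)$: take a maximum-size distribution $p$ on $G$ from which $x$ is not reachable; its size is $\pi(G_\omega,x)-1$. By Lemma~\ref{lem:noinner}, $p$ has no pebbles on the inner vertices of $E$. Define $q$ on $\tilde G$ by $q=p$ on $V(H)\setminus\{v\}$ (where $H$ is the component-side containing $x$ after deleting inner vertices), $q=p$ on the other side, and agreeing at $v$ and $u$ — in effect, $q$ is just $p$ restricted to $V(\tilde G)=V(G)\setminus\{\text{inner vertices}\}$, which has the same size as $p$. I then need to show $x$ is not reachable from $q$ in $\tilde G_{\tilde\omega}$. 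If it were, there would be an acyclic multiset $S$ of moves on $\tilde G$ with $q_S\ge 1_{\{x\}}$; replace each use of the move $(v\a u)$ (weight $a=\prod$ edge-weights of $E$) by the corresponding composite sequence of $n+1$ moves stepping along the ear from $v$ to $u$, and similarly for $(u\a v)$. Since the product of the ear weights is exactly $a$, removing $a$ pebbles at $v$ and sequentially pushing through the thread deposits exactly one pebble at $u$, with all intermediate counts nonnegative (the thread had no pebbles to begin with, and each step consumes exactly what the previous step produced). This yields an acyclic multiset $R$ on $G_\omega$ with $p_R\ge 1_{\{x\}}$, contradicting non-reachability of $x$ from $p$.

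For $\pi(G_\omega,x)\le\pi(\tilde G_{\tilde\omega},x)$: take a distribution $p$ on $G$ of size $\pi(\tilde G_{\tilde\omega},x)$. First push all pebbles off the inner vertices of $E$ toward whichever end is ``cheaper'' — concretely, collapse the ear contribution using the same composite-move idea in reverse, producing a distribution $p'$ supported on $V(\tilde G)$ with $\|p'\|\ge$ something; the cleanest formulation is that from $p$ we can reach, on $G$, any configuration reachable in $\tilde G$ from the distribution $q$ on $\tilde G$ obtained by moving pebbles from inner vertices of $E$ greedily to $v$ or $u$ (losing at worst the same factors that the edge of weight $a$ would lose). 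Since $\|q\|\ge\pi(\tilde G_{\tilde\omega},x)$ after this consolidation (moving along the thread and using weight $a$ on the new edge mirrors exactly the arithmetic of the original path), $x$ is reachable from $q$ in $\tilde G_{\tilde\omega}$ via an acyclic multiset $S$; re-expand the $(v\a u)$ and $(u\a v)$ moves in $S$ into ear-traversing sequences as before, prepend the ear-consolidation moves, check acyclicity and nonnegativity, and conclude $x$ is reachable from $p$ in $G_\omega$.

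The main obstacle will be the $\le$ direction, specifically making the ear-consolidation step precise: when pebbles sit on several inner vertices of $E$ simultaneously, pushing them to $v$ versus $u$ are competing options, and one must verify that the resulting $q$ on $\tilde G$ really has size at least $\pi(\tilde G_{\tilde\omega},x)$ and that the combined move-multiset on $G$ stays acyclic. I expect this to be handled by observing that the edge $vu$ of weight $a$ in $\tilde G$ faithfully simulates the thread of $G$ — that is, $\pi_t(\text{path of }E, v)$ and $\pi_t(\text{edge }vu,v)$ agree, by Proposition~\ref{pro:leaves2} applied iteratively (the product of weights along a path is its effective single-edge weight) — so that no pebbling power is lost or gained in either direction, and then invoking Theorem~\ref{thm:fundam} to pass between reachability and acyclic balanced multisets at each stage.
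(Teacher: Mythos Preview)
Your two paragraphs prove the \emph{same} inequality. In your first paragraph you take a maximum insufficient distribution $p$ on $G$, restrict to $q$ on $\tilde G$, and show $q$ is insufficient there; this exhibits an insufficient distribution on $\tilde G$ of size $\pi(G_\omega,x)-1$, hence $\pi(\tilde G_{\tilde\omega},x)\ge\pi(G_\omega,x)$, which is the inequality $\pi(G_\omega,x)\le\pi(\tilde G_{\tilde\omega},x)$, not $\ge$ as you label it. (This argument is correct and is exactly the paper's proof of $\le$, modulo the minor slip that expanding one $(u\a v)$ move requires not ``$n+1$ moves'' but $\prod_{j>i}\omega(v_{j-1}v_j)$ copies of each $(v_{i-1}\a v_i)$.) Your second paragraph, labeled $\le$, also attempts $\le$: you take $p$ on $G$ of size $\pi(\tilde G_{\tilde\omega},x)$ and try to show $x$ is reachable in $G$. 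So the inequality $\pi(G_\omega,x)\ge\pi(\tilde G_{\tilde\omega},x)$ is never addressed.

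Moreover, the consolidation approach in your second paragraph does not work: pushing pebbles off inner vertices toward $v$ or $u$ \emph{loses} pebbles (each move costs $\omega-1\ge1$), so the resulting $q$ on $\tilde G$ has $\|q\|<\|p\|=\pi(\tilde G_{\tilde\omega},x)$ whenever $p$ has any pebbles on inner vertices, and you cannot conclude $x$ is reachable from $q$ in $\tilde G_{\tilde\omega}$. The observation that the weight-$a$ edge ``mirrors the arithmetic of the path'' only says that pebbles \emph{at the endpoints} behave the same way; it says nothing about pebbles stranded in the middle. The paper handles the missing $\ge$ direction by going the other way: start with an arbitrary $p$ on $\tilde G$ of size $\pi(G_\omega,x)$, extend it to $G$ by zeros on the inner vertices, obtain an acyclic multiset $S$ on $G$ reaching $x$, and then collapse each directed $u$-to-$v$ path through the ear in $T(G,S)$ into a single $(u\a v)$ move on $\tilde G$, discarding any leftover moves inside the ear. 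This avoids the consolidation problem entirely because the extended distribution has no pebbles on inner vertices to begin with.
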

\begin{proof}
Without loss of generality we can assume that $x$ is closer to $v$
than to $u$ as shown in Figure~\ref{cap:cutear}. Let $u=v_{1},v_{2},\ldots,v_{k}=v$
be the consecutive vertices of $E$.

First we show that $\pi(G_{\omega},x)\le\pi(\tilde{G}_{\tilde{\omega}},x)$.
For a contradiction, assume that $\pi(G_{\omega},x)>\pi(\tilde{G}_{\tilde{\omega}},x)$.
Let $p$ be a maximum size pebble distribution on $G$ from which
$x$ is not reachable. By Lemma~\ref{lem:noinner}, $p$ has no pebbles
on the inner vertices of $E$ so the restriction $q=p|_{V(\tilde{G})}$
is a pebble distribution on $\tilde{G}$ with $\|q\|=\|p\|=\pi(G_{\omega},x)-1\ge\pi(\tilde{G}_{\tilde{\omega}},x)$.
Hence there is a multiset $S$ of pebbling moves on $\tilde{G}_{\tilde{\omega}}$
such that $q_{S}\ge1_{\{x\}}$. Let $R$ be the multiset of pebbling
moves containing the moves in $S$ with each move of the form $(u\a v)$
replaced by the moves $(v_{1}\a v_{2}),\ldots,(v_{k-1}\a v_{k})$.
Then $q_{R}\ge1_{\{x\}}$ which is a contradiction.

Now we show that $\pi(G_{\omega},x)\ge\pi(\tilde{G}_{\tilde{\omega}},x)$.
Let $p$ be a pebble distribution on $\tilde{G}$ with size $\pi(G_{\omega},x)$.
Let $q$ be the extension of $p$ to $V(G)$ such that $q$ is zero
on the inner vertices of $E$. Then $\|q\|=\|p\|=\pi(G_{\omega},x)$
and so there is an acyclic multiset $S$ of pebbling moves such that
$q_{S}\ge1_{\{x\}}$. We create a multiset $R$ of pebbling moves
on $\tilde{G}$ as follows. We start with $S$. We then search for
a directed path in $T(G,S)$ connecting $u$ to $v$ and we remove
all the moves corresponding to the arrows of this directed path. We
do this until there are no more such directed paths. Then we add as
many copies of $(u\a v)$ as the number of directed paths removed.
Finally we remove all moves involving inner vertices of $E$. It is
easy to see that $p_{R}\ge1_{\{x\}}$.
\end{proof}

\subsection{Open ears}

\begin{figure}
\begin{tabular}{c}
\input{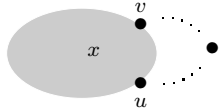}\tabularnewline
\end{tabular}

\caption{\label{cap:openear}A graph with an open ear.}

\end{figure}

Figure~\ref{cap:openear} depicts an open ear. An open ear cannot
be replaced by a single edge but we can still take advantage of it
using squishing as explained in Section~\ref{sec:Squished}.

\subsection{Examples}

\begin{figure}
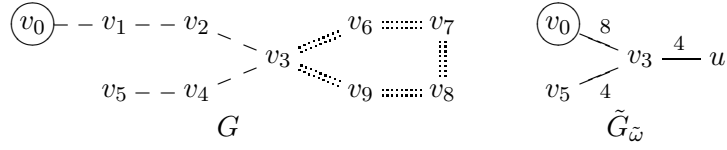

\begin{tabular}{ccc}
\input{simplifyExa.tex} &  & \input{simplifyEx1.tex}\tabularnewline
$G$ &  & $\tilde{G}_{\tilde{\omega}}$\tabularnewline
\end{tabular}

\caption{\label{cap:Simpli1}Simplification of $G$ with goal vertex $v_{0}$
such that $\pi(G,v_{0})=\pi(\tilde{G}_{\tilde{\omega}},v_{0})+1=36$.
The two cut ears denoted by dashed edges are replaced by the weighted
edges $v_{0}v_{3}$ and $v_{5}v_{3}$ in $\tilde{G}$. The closed
ear denoted by double dotted edges is replaced by the weighted edge
$v_{3}u$.}

\end{figure}

\begin{figure}
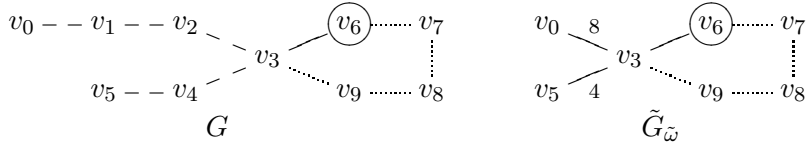

\begin{tabular}{ccc}
\input{simplifyExb.tex} &  & \input{simplifyEx2.tex}\tabularnewline
$G$ &  & $\tilde{G}_{\tilde{\omega}}$\tabularnewline
\end{tabular}

\caption{\label{cap:Simpli2}Simplification of $G$ with goal vertex $v_{6}$
such that $\pi(G,v_{6})=\pi(\tilde{G}_{\tilde{\omega}},v_{6})=22$.
The two cut ears denoted by dashed edges are replaced by the weighted
edges $v_{0}v_{3}$ and $v_{5}v_{3}$ in $\tilde{G}$. The open ear
denoted by dotted edges remains in $\tilde{G}$.}

\end{figure}

Figures~\ref{cap:Simpli1} and \ref{cap:Simpli2} show two examples
of simplified graphs using ears. The graph $G$ is the same in both
examples but the ears are different because the goal vertices are
different. The pebbling number of the graph is $\pi(G)=36$. The path
connecting $v_{3}$ to $v_{5}$ could have been simplified using leaves
in both examples. The path connecting $v_{0}$ to $v_{3}$ could have
been simplified using leaves as well, but only in the second example.
In both of these cases a further simplification is possible using
leaves and Proposition~\ref{pro:leaves1}.

Note that in the second example the end vertices $v_{3}$ and $v_{6}$
of the open ear are adjacent. This possibility is important to keep
in mind during the development of an algorithm to find open ears.

\section{Barely sufficient pebble distributions}

Let $\mathcal{D}(G)$ be the set of pebble distributions on the graph
$G$. For $p,q\in\mathcal{D}(G)$ we write $p\le q$ if $p(v)\le q(v)$
for all $v\in G$. This gives a partial order on $\mathcal{D}(G)$.
We write $p<q$ if $p\le q$ but $p\not=q$. It is clear that if a
goal vertex is reachable from $p$ and $p\le q$ then the goal vertex
is also reachable from $q$.
\begin{defn}
Let $x$ be a goal vertex of $G_{\omega}$. A pebble distribution
$p$ is \emph{sufficient} for $x$ if $x$ is reachable from $p$.
The set of sufficient distributions for $x$ is denoted by $\mathcal{S}(G_{\omega},x)$.
A pebble distribution $p\in\mathcal{S}(G_{\omega},x)$ is \emph{barely
sufficient} for $x$ if $x$ is not reachable from any pebble distribution
$q$ satisfying $q<p$. The set of barely sufficient distributions
for $x$ is denoted by $\mathcal{B}(G_{\omega},x)$. The set of \emph{insufficient
distributions} for $x$ is $\mathcal{I}(G_{\omega},x)=\mathcal{D}(G)\setminus\mathcal{S}(G_{\omega},x)$.
We are going to use the notation $\mathcal{S}(x)$, $\mathcal{B}(x)$
and $\mathcal{I}(x)$ if $G$ and $\omega$ is clear from the context.
\end{defn}
We can partition $\mathcal{B}(x)$ into the disjoint union $\mathcal{B}_{0}(x)\dot{\cup}\cdots\dot{\cup}\mathcal{B}_{k}(x)$
where $\mathcal{B}_{i}(x)$ contains those distributions in $\mathcal{B}(x)$
from which $x$ is reachable in $i$ pebbling moves but $x$ is not
reachable in fewer than $i$ moves. Note that the only element of
$\mathcal{B}_{0}(x)$ is the pebble distribution $1_{\{x\}}$ that
contains a single pebble on $x$.
\begin{example}
\label{exa:barely}Figure~\ref{cap:barelyExample} shows an example
of $\mathcal{B}(G,x)$.

\begin{figure}
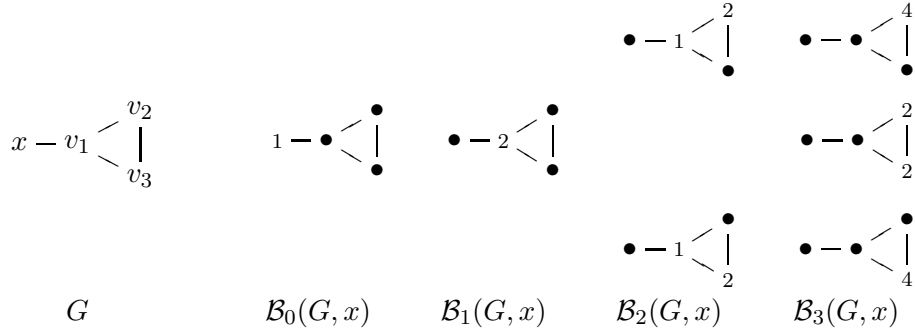

\begin{tabular}{cccccc}
 &  &  &  & \input{ex111.tex} & \input{ex1111.tex}\tabularnewline
\input{ex0.tex} &  & \input{ex1.tex} & \input{ex11.tex} &  & \input{ex1112.tex}\tabularnewline
 &  &  &  & \input{ex112.tex} & \input{ex1113.tex}\tabularnewline
$G$ & $\quad$ & $\mathcal{B}_{0}(G,x)$ & $\mathcal{B}_{1}(G,x)$ & $\mathcal{B}_{2}(G,x)$ & $\mathcal{B}_{3}(G,x)$\tabularnewline
\end{tabular}

\caption{\label{cap:barelyExample}The barely sufficient pebble distributions
for vertex $x$. The vertices denoted by bullets have no pebbles. }

\end{figure}

\end{example}
The following result is the main reason for our interest in barely
sufficient distributions.
\begin{prop}
\label{pro:finding_insuff}We have $p\in\mathcal{I}(G_{\omega},x)$
if and only if $q\le p$ does not hold for any $q\in\mathcal{B}(G_{\omega},x)$.\end{prop}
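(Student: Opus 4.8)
The plan is to prove the contrapositive-friendly biconditional directly, splitting into the two implications. For the "if" direction, suppose $q \le p$ fails for every $q \in \mathcal{B}(G_\omega,x)$; I want to conclude $p \in \mathcal{I}(G_\omega,x)$, i.e.\ that $x$ is not reachable from $p$. Suppose for contradiction that $x$ \emph{is} reachable from $p$, so $p \in \mathcal{S}(G_\omega,x)$. The key step is to produce a barely sufficient distribution below $p$: since $\mathcal{D}(G)$ ordered by $\le$ has no infinite strictly descending chains (each $q < p$ has $\|q\| < \|p\|$, a nonnegative integer that strictly decreases), we may pick a $\le$-minimal element $q$ of the set $\{q' \in \mathcal{S}(G_\omega,x) : q' \le p\}$, which is nonempty because it contains $p$. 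This $q$ is sufficient, and by minimality no $q'' < q$ is sufficient, so $q \in \mathcal{B}(G_\omega,x)$; but $q \le p$, contradicting our hypothesis. Hence $x$ is not reachable from $p$, i.e.\ $p \in \mathcal{I}(G_\omega,x)$.

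For the "only if" direction, suppose $p \in \mathcal{I}(G_\omega,x)$ and suppose toward a contradiction that $q \le p$ for some $q \in \mathcal{B}(G_\omega,x)$. Since $q$ is barely sufficient it is in particular sufficient, so $x$ is reachable from $q$; by the monotonicity remark already stated in the excerpt (if $x$ is reachable from $q$ and $q \le p$ then $x$ is reachable from $p$), $x$ is reachable from $p$, so $p \in \mathcal{S}(G_\omega,x)$, contradicting $p \in \mathcal{I}(G_\omega,x) = \mathcal{D}(G) \setminus \mathcal{S}(G_\omega,x)$. This settles the second implication.

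The only part that needs any care is the well-foundedness argument used to extract a minimal sufficient distribution below $p$ in the first direction; everything else is immediate from the definitions and the monotonicity of reachability. I would phrase that step simply by induction on $\|p\|$: if $p$ itself is barely sufficient we are done, and otherwise there is a sufficient $q < p$ with $\|q\| < \|p\|$, to which the inductive hypothesis applies, yielding a barely sufficient $q' \le q \le p$. I do not expect any genuine obstacle here — the statement is essentially a packaging of the definition of $\mathcal{B}(G_\omega,x)$ together with the descending-chain finiteness of pebble distributions.
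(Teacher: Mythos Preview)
Your proof is correct and follows essentially the same approach as the paper: the paper's ``remove pebbles from $p$ one by one until we get a $q\in\mathcal{B}(G_{\omega},x)$'' is exactly your well-foundedness/induction-on-$\|p\|$ argument producing a minimal sufficient distribution below $p$, and the paper dismisses the other direction as obvious, which is your monotonicity argument. Your write-up is more careful, but the underlying idea is identical.
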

\begin{proof}
If $x$ is reachable from a pebble distribution $p$ then we can remove
pebbles from $p$ one by one if needed until we get a $q\in\mathcal{B}(G_{\omega},x)$
that satisfies $q\le p$. The other direction of the result is obviously
true.
\end{proof}
The following example shows how Proposition~\ref{pro:finding_insuff}
can be used to find the insufficient distributions.
\begin{example}
In Example~\ref{exa:barely} the maximal elements of $\mathcal{I}(G,x)$
are $p(x,v_{1},v_{2},v_{3})=(0,1,1,1)$, $q(x,v_{1},v_{2},v_{3})=(0,0,3,1)$
and $r(x,v_{1},v_{2},v_{3})=(0,0,1,3)$. The maximum size is $|q|=4=|r|$
and so $\pi(G,x)=5$. 
\end{example}
Our purpose now is to construct algorithms for finding $\mathcal{B}(G_{\omega},x)$
and $\pi(G_{\omega},x)$.

\section{Finding barely sufficient distributions}

The following result shows how a superset of $\mathcal{B}(G_{\omega},x)$
can be constructed using recursion starting at $\mathcal{B}_{0}(G_{\omega},x)=\{1_{\{x\}}\}$.

\begin{prop}
\label{pro:Binv}If $p\in\mathcal{B}_{i+1}(G_{\omega},x)$ then $p=q_{r^{-1}}$
for some $q\in\mathcal{B}_{i}(G_{\omega},x)$ and pebbling move $r$. \end{prop}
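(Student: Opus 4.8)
The plan is to start from a distribution $p\in\mathcal{B}_{i+1}(G_\omega,x)$, take a shortest pebbling sequence witnessing reachability of $x$, isolate its first move $r=(v\a u)$, and show that the predecessor distribution $q$ defined by $p=q_{r^{-1}}$ lies in $\mathcal{B}_i(G_\omega,x)$. Concretely, since $p\in\mathcal{B}_{i+1}(G_\omega,x)$ there is an executable sequence $s=(s_1,\ldots,s_{i+1})$ from $p$ with $p_s(x)\ge1$ and with $i+1$ minimal. Set $r=s_1$ and note $p_r$ is a pebble distribution (executability of $s$ forces $p_{s_1}\ge0$), so $q:=p$ on the complement of $\{v,u\}$, $q(v)=p(v)-\omega(vu)$, $q(u)=p(u)+1$ is a pebble distribution, $q=p_r$, and $p=q_{r^{-1}}$.

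Next I would verify the three defining properties of membership in $\mathcal{B}_i(G_\omega,x)$. First, $x$ is reachable from $q=p_r$ in $i$ moves via $(s_2,\ldots,s_{i+1})$, which is executable from $q$ by construction. Second, $x$ is not reachable from $q$ in fewer than $i$ moves: if it were, say via a length-$j$ executable sequence $t$ with $j<i$ and $q_t(x)\ge1$, then $(r)t$ would be an executable length-$(j+1)$ sequence from $p$ reaching $x$ with $j+1<i+1$, contradicting minimality of $i+1$. This handles the ``$\mathcal B_i$ versus $\mathcal B_{<i}$'' grading. Third, and this is the delicate point, I must show $q$ is \emph{barely} sufficient, i.e.\ no $q'<q$ has $x$ reachable. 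Suppose $q'<q$ with $x$ reachable from $q'$. The natural move is to push $q'$ back up through $r^{-1}$: let $p'=q'_{r^{-1}}$, so $p'(v)=q'(v)+2$, $p'(u)=q'(u)-1$, $p'=p$ elsewhere. The trouble is that $p'$ need not be a pebble distribution (if $q'(u)=0$) and need not satisfy $p'\le p$ in general — indeed $p'(v)$ could exceed $p(v)$ when $q'(v)$ is close to $q(v)$ — so one cannot simply invoke monotonicity.

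The hard part will therefore be ruling out a strictly smaller sufficient $q'$. The right tool is Theorem~\ref{thm:fundam}: reachability from $q'$ gives an acyclic multiset $S$ with $q'_S\ge1_{\{x\}}$. I would compare $S$ with the move multiset of $(s_2,\ldots,s_{i+1})$ and with $r$, aiming to assemble from $S\cup\{r\}$ (after deleting any occurrences of $r^{-1}$-type cancellations, i.e.\ using that $d^-(v)$ terms can absorb the extra pebble) an acyclic multiset $S'$ with $p''_{S'}\ge1_{\{x\}}$ for some pebble distribution $p''<p$, contradicting $p\in\mathcal B_{i+1}\subseteq\mathcal B(G_\omega,x)$. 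The key bookkeeping is that adding the single move $r=(v\a u)$ to $S$ raises the pebble count at $u$ by $1$ and lowers it at $v$ by $\omega(vu)$, exactly inverting the transformation $p\mapsto q$; since $q'<q$, the resulting distribution is $\le p$ and strictly smaller in at least one coordinate, and acyclicity of the combined multiset can be restored by Proposition~\ref{pro:untangling} without decreasing $p''_{S'}$. Making this combination precise — in particular checking that the move $r$ does not create a cycle with $S$, or removing it along with a returning path if it does — is where the real work lies, but it is a finite case analysis of how the arrow $(v,u)$ interacts with the acyclic digraph $T(G,S)$.
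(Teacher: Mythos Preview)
Your handling of the first two conditions—that $x$ is reachable from $q=p_{s_1}$ in $i$ moves via $(s_2,\ldots,s_{i+1})$, and not in fewer since prepending $s_1$ would contradict minimality—is exactly the paper's argument. In fact the paper's proof stops there and never explicitly verifies that $q$ is barely sufficient, so your instinct to check this third condition is good.

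However, your claim that $p'=q'_{r^{-1}}$ ``need not satisfy $p'\le p$'' is simply wrong, and this miscalculation is what sends you toward unnecessary machinery. With $r=(v\a u)$ we have $q(v)=p(v)-\omega(vu)$ and $q(u)=p(u)+1$, so
\[
p'(v)=q'(v)+\omega(vu)\le q(v)+\omega(vu)=p(v),\qquad p'(u)=q'(u)-1\le q(u)-1=p(u),
\]
and $p'(w)=q'(w)\le q(w)=p(w)$ for $w\notin\{u,v\}$ (note: $p'$ agrees with $q'$ off $\{u,v\}$, not with $p$ as you wrote). Hence $p'\le p$ always, and $p'<p$ since $q'\ne q$. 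The only genuine issue is that $p'(u)=q'(u)-1$ may be negative, i.e.\ the case $q'(u)=0$. But then $q'$ itself already satisfies $q'\le p$ with strict inequality at $v$ (since $q'(v)\le q(v)<p(v)$ and $q'(u)=0\le p(u)$), and $x$ is reachable from $q'$. In either case one obtains a distribution strictly below $p$ from which $x$ is reachable—via $r$ and then onward from $q'$ in the first case, directly from $q'$ in the second—contradicting $p\in\mathcal{B}(G_\omega,x)$. So the ``delicate point'' is a two-line case split, and the apparatus of Theorem~\ref{thm:fundam} and Proposition~\ref{pro:untangling} is not needed.
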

\begin{proof}
Suppose that $p\in\mathcal{B}_{i+1}(G_{\omega},x)$. Then there is
an executable sequence $s=(s_{1},\ldots,s_{i+1})$ of pebbling moves
such that $p_{s}(x)\ge1$. Then with $q=p_{s_{1}}$ we clearly have
$p=q_{s_{1}^{-1}}$. Vertex $x$ is reachable from $q$ in the $i$
moves of the sequence $(s_{2},\ldots,s_{i+1})$. If $x$ is reachable
from $q$ in $j$ moves then it is reachable from $p$ in $j+1$ moves.
So $x$ cannot be reached from $q$ in fewer than $i$ moves, which
means that $q\in\mathcal{B}_{i}(G_{\omega},x)$.
\end{proof}
We do not have to use every pebbling move $r$ during the construction
of $\mathcal{B}_{i+1}(G_{\omega},x)$ from $\mathcal{B}_{i}(G_{\omega},x)$
as shown in the next result that essentially a simple case of the
No-Cycle Lemma. 
\begin{prop}
\label{pro:initial}Let $p\in\mathcal{B}(G_{\omega},x)$. If $p_{S}\ge1_{\{x\}}$
and $(v\a u)\in S$ then $q=p_{(v\a u)^{-1}}\not\in\mathcal{B}(G_{\omega},x)$.\end{prop}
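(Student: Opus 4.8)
The plan is to show directly that $q$ violates one of the requirements for membership in $\mathcal{B}(G_{\omega},x)$. First I would handle the degenerate case $p(u)=0$: then $q(u)=p(u)-1<0$, so $q$ is not a pebble distribution at all and hence lies outside $\mathcal{B}(G_{\omega},x)$. From now on assume $p(u)\ge 1$, so $q$ is a legitimate distribution with $q(v)=p(v)+\omega(vu)$ and $q(u)=p(u)-1$.

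With this assumption $q$ is automatically \emph{sufficient}: since $q(v)=p(v)+\omega(vu)\ge\omega(vu)$, the move $(v\a u)$ is executable from $q$ and restores $q_{(v\a u)}=p$, so any sequence reaching $x$ from $p$, prefixed by $(v\a u)$, reaches $x$ from $q$. Thus the only way to conclude is to show $q$ is not \emph{barely} sufficient, i.e. to exhibit a distribution strictly below $q$ from which $x$ is still reachable. The mechanism is the cancellation identity already used in Lemma~\ref{lem:noinner}: if $R=S\setminus\{(v\a u)\}$ then $p_{R}=q_{S}$, because deleting one copy of the move $(v\a u)$ from $S$ exactly reverses, at $v$ and at $u$, the effect of the inverse move $(v\a u)^{-1}$ that produced $q$, and changes nothing elsewhere.

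Applying $S$ to $q$ gives $q_{S}=p_{S}+\omega(vu)\,1_{\{v\}}-1_{\{u\}}$, which carries a surplus of $\omega(vu)\ge 1$ pebbles on $v$. Whenever this stays nonnegative everywhere — the one place in doubt being $u$, where $q_{S}(u)=p_{S}(u)-1$ — I can spend that surplus: writing $r=q-1_{\{v\}}<q$, one checks $r_{S}\ge 1_{\{x\}}$, and after untangling $S$ by Proposition~\ref{pro:untangling} and ordering it regularly by Proposition~\ref{pro:regular} this shows $x$ is reachable from $r$; hence $q\notin\mathcal{B}(G_{\omega},x)$ by Proposition~\ref{pro:finding_insuff}. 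The sub-cases $x=v$ and $x=u$ are immediate and should be recorded separately.

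The hard part is the remaining case in which removing $(v\a u)$ from $S$ overdraws $u$, that is $p_{S}(u)=0$, so the pebble $(v\a u)$ deposits on $u$ is genuinely consumed later. Here I would imitate the No-Cycle argument used in Propositions~\ref{pro:untangling}, \ref{pro:cut} and~\ref{pro:cut-ear}: together with $(v\a u)$, peel off, one at a time, a maximal outgoing path in $T(G,S)$ starting at $u$, which returns pebbles to $u$ to cover the deficit without ever creating a new deficit downstream, and show that after finitely many steps the surviving multiset is balanced with some distribution strictly below $q$ while still dropping a pebble on $x$. Getting the weight bookkeeping right throughout this peeling is the main technical obstacle.
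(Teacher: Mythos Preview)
The statement as printed is actually false, and your ``hard case'' is exactly where it fails. Take $G=P_3$ with consecutive vertices $x,v_1,v_2$ and default weights, let $p(x,v_1,v_2)=(0,1,2)\in\mathcal{B}(G,x)$ and $S=\{(v_2\a v_1),(v_1\a x)\}$. Then $(v_2\a v_1)\in S$ and $p_S(v_1)=0$, yet $q=p_{(v_2\a v_1)^{-1}}=(0,0,4)$ \emph{is} barely sufficient. Your proposed path-peeling cannot rescue this: the only outgoing path from $u=v_1$ in $T(G,S)$ terminates at $x$, so peeling it off destroys the pebble you need on $x$.

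What the proposition is meant to say (and what the interpretation paragraph and Algorithm~\ref{alg:1} actually use) is that if $(v\a u)\in S$ then $q=p_{(u\a v)^{-1}}\notin\mathcal{B}(G_\omega,x)$: having moved a pebble along an edge one way, pushing back the other way is wasteful. The paper's own computation confirms this reading --- the last line requires $q_S(u)-2=p_S(u)$ and $q_S(v)+1=p_S(v)$, i.e.\ $q(u)=p(u)+2$ and $q(v)=p(v)-1$, which is $q=p_{(u\a v)^{-1}}$, not $p_{(v\a u)^{-1}}$.

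For the corrected statement the proof is a single computation with no case split: put $\tilde q=q-1_{\{u\}}-1_{\{v\}}$ and $R=S\setminus\{(v\a u)\}$, and check directly that $\tilde q_R=p_S\ge 1_{\{x\}}$ while $\tilde q<q$. (For general weights replace the subtracted $1$'s by $\omega(vu)-1$; if this makes $\tilde q(v)$ negative, truncate it to $0$ --- this only increases $\tilde q_R$ and still leaves $\tilde q(u)=p(u)+1<q(u)$.) None of your easy/hard split, surplus bookkeeping, or path surgery is needed once the direction of the inverse move is corrected.
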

\begin{proof}
Let $\tilde{q}(u,v,*)=(q(u)-1,q(v)-1,q(*))$ and $R=S\setminus\{(v\a u)\}$.
Then $\tilde{q}<q$ and \begin{align*}
\tilde{q}_{R}(u,v,*) & =(\tilde{q}_{S}(u)-1,\tilde{q}_{S}(v)+2,\tilde{q}_{S}(*))\\
 & =(q_{S}(u)-2,q_{S}(v)+1,q_{S}(*))\\
 & =(p_{S}(u),p_{S}(v),p_{S}(*))=p_{S}(u,v,*)\end{align*}
which means $\tilde{q}_{R}=p_{S}\ge1_{\{x\}}$. So $q$ is not barely
sufficient.
\end{proof}
An important interpretation of this result is that every distribution
in $\mathcal{B}(G_{\omega},x)$ can be gotten as $p_{T}$ where $p=1_{\{x\}}$
and $T$ is a multiset of inverse pebbling moves such that $(v\a u)^{-1}$
and $(u\a v)^{-1}$ are not in $T$ together for any $u$ and $v$.
Keeping track of the directions of the inverse pebbling moves speeds
up the calculation of finding $\mathcal{B}(G_{\omega},x)$. It also
helps eliminating moves that cannot be initial moves. 

\incmargin{1em}
\begin{algorithm2e}[tp]
\dontprintsemicolon
\linesnumbered
\hrule
\medskip
\SetKwFunction{KwPB}{pushBack}
\SetKwFunction{KwInsert}{insert}
\SetKwFunction{KwRemove}{remove}
\SetKw{KwBreak}{break}
\SetKw{KwWhile}{while}
\KwIn{$G_\omega$, $x$}
\KwOut{$\mathcal{B}(G_\omega,x)$}
\smallskip
\hrule
\smallskip
$(p,E,W):=(1_{\{x\}}, E(\overrightarrow{G}),\emptyset) $ \hfill {\rm// (distribution, transfers, forbidden vertices)} \;
$Q$.\KwPB{$(p,E,W)$} \hfill {\rm// growing queue of distributions} \;
\For{$(p,E,W)\in Q$ } {
  \For(\hfill {\rm// $u$ has a pebble}){$u\in{\rm supp}(p)$}{ 
     \For(\hfill {\rm// allowed transfer from $v$ to $u$}){$(v,u)\in E$ {\rm and} $u\not\in W$   
       }{
      $q := p_{(v\a u)^{-1}}$ \hfill {\rm// candidate distribution} \;
      $F := E\setminus\{(u,v)\}$ \hfill {\rm// backward transfer no longer allowed} \;
      $X := W\cup \{u\}$ \hfill {\rm// transfer to $u$ no longer allowed} \;
      \For{$(\tilde{q},\tilde{F},\tilde{X})\in Q$}{
        \If(\hfill {\rm// candidate too large?}){$\tilde{q}<q$} { 
	  \KwBreak \hfill {\rm// candidate fails} \;
	}
        \If(\hfill {\rm// candidate already in queue?}){$\tilde{q}=q$ } {
	  $\tilde{F} := \tilde{F}\cap F$ \hfill {\rm// fewer allowed edges for $\tilde{q}$} \;
	  $\tilde{X} := \tilde{X}\cap X$ \hfill {\rm// not initial in any way} \;
	  \KwBreak \;
	}
        \If(\hfill {\rm// $\tilde{q}$ is not barely sufficient?}){$\tilde{q}>q$} { 
	  $Q$.\KwRemove{$(\tilde{q},\tilde{F},\tilde{X})$} \hfill {\rm// remove $\tilde{q}$ from queue } \;
	}	
      }    
      \If {\rm did not break} {
         $Q$.\KwPB{$(q,F,X)$} \hfill {\rm// candidate works} \;
      }
    }
  }
  \hfill {\rm// modification (see Note~\ref{not:modify-alg1})}
%  \If {$p$ {\rm is not squished}} {
%     $Q$.\KwRemove{$(p,E,W)$}
%  }
}
$\mathcal{B}(G_\omega,x):= \{ p \mid (p,E,W)\in Q\}$ \;
\medskip
\hrule
\caption{Algorithm to find the set $\mathcal{B}(G_\omega,x)$ of barely sufficient 
distributions.}
\label{alg1}
\end{algorithm2e}

Given a graph $G$, let $\overrightarrow{G}$ be the directed graph
whose vertex set is $V(G)$ and whose arrow set contains two arrows
$(u,v)$ and $(v,u)$ for every edge $uv\in E(G)$.
\begin{algorithm}
\label{alg:1}The algorithm shown in Figure~\ref{alg1} finds the
set of barely sufficient distributions.
\end{algorithm}
The heart of the algorithm is Proposition~\ref{pro:Binv}. We apply
inverse pebbling moves to transfer pebbles in barely sufficient distributions
in hope of finding new barely sufficient distributions. We use triples
of the form $(p,E,W)$ where $p$ is a pebble distribution. The role
of $E$ is to keep track of the direction of the pebble flow so that
we can avoid the back and forth transfer as explained in Proposition~\ref{pro:initial}.
The role of $W$ is to avoid pebbling sequences that are not regular
as explained in Theorem~\ref{thm:fundam}(4). Now we give the detailed
explanation of the algorithm:
\begin{itemize}
\item lines~1--2: We fill the queue $Q$ of barely sufficient distribution
candidates with $\mathcal{B}_{0}(G_{\omega},x)$. We set $E=E(\overrightarrow{G})$
since the pebbles can flow in any direction. We set $W=\emptyset$
since no vertex is ruled out as the starting vertex of an initial
move. 
\item line~3: This loop takes an element $p$ of $Q$ and applies a possible
inverse pebbling move to create a new distribution $q$. If $p\in\mathcal{B}_{i}(x)$
then $q$ is a candidate for $\mathcal{B}_{i+1}(x)$.
\item line~4: We find a vertex $u$ that has at least one pebble. We plan
to remove a pebble from this vertex and add two pebbles to an adjacent
vertex $v$.
\item line~5: We only want to apply $(v\a u)^{-1}$ if $(u\a v)^{-1}$
was not used before and if $(v\a u)$ is an initial move.
\item line~6: We apply the inverse pebbling move $(v\a u)^{-1}$ to create
the new barely sufficient candidate $q$.
\item line~7: According to Proposition~\ref{pro:initial}, we do not want
to apply $(u\a v)^{-1}$ since we already used $(v\a u)^{-1}$.
\item line~8: Any move of the form $(u\a w)$ is not an initial move since
we already have a move of the form $(v\a u)$.
\item line 9: The loop checks the newly created candidate against the other
distributions in the queue.
\item lines 10--11: We already put a smaller candidate is the queue so the
new candidate cannot be barely sufficient.
\item line 12: The new candidate $q$ is already in the queue. It is likely
that it was created using different inverse pebbling moves. We do
not add this candidate to the queue twice. Still, we can update the
information about this distribution in the queue. 
\item line 13: We can reduce the possible inverse pebbling moves using Proposition~\ref{pro:initial}.
\item line 14: It is possible that a move is initial in one set of pebbling
moves but not in another set. We only want to declare a move not initial
if it is not initial in every possible set of pebbling moves that
reaches the goal vertex. 
\item lines 16--17: If the new candidate is smaller than a distribution
$\tilde{q}$ in the queue then $\tilde{q}$ cannot be barely sufficient.
Therefore we remove it from the queue.
\item lines 18--19: The new candidate is added to the queue.
\end{itemize}
\begin{figure}
\input{barelyK3.tex}

\vskip .25in \hrule \vskip .1in 

\begin{tabular}{l|l}
1. $Q=\{p^{(0)}\}$ & 6. $p^{(31)}$ tested, smaller than $p^{(12)}$\tabularnewline
2. $Q=\{p^{(0)},p^{(11)}\}$ & 7. $Q=\{p^{(0)},p^{(11)},p^{(21)}\}$ \tabularnewline
3. $Q=\{p^{(0)},p^{(11)},p^{(12)}\}$ & 8. $Q=\{p^{(0)},p^{(11)},p^{(21)},p^{(31)}\}$\tabularnewline
4. $Q=\{p^{(0)},p^{(11)},p^{(12)},p^{(21)}\}$ & 9. $p^{(32)}$ tested but is larger than $p^{(0)}$\tabularnewline
5. $p^{(22)}$ tested but is larger than $p^{(21)}$ & 10. $p^{(41)}$ tested but is larger than $p^{(0)}$\tabularnewline
\end{tabular}

\vskip .1in\hrule

\caption{\label{cap:barelyK3}The distributions in solid frames belong to $\mathcal{B}(G_{\omega},x)$.
A distribution $p^{(ij)}$ in a dotted frame is never in the queue.
A solid arrow from $q$ to $p$ is drawn with label $(v\a u)$ if
$q=p_{(v\a u)^{-1}}$. A dashed arrow from $q$ to $p$ is drawn if
$q\ge p$ and so $q\not\in\mathcal{B}(G_{\omega},x)$. The table shows
how the distribution queue $Q$ changes during the execution of the
Algorithm~\ref{alg:1}.}

\end{figure}

\begin{example}
\label{exa:alg}Let $x$ be the goal vertex and $v_{1}$ and $v_{2}$
be the other vertices of the complete graph $G=K_{3}$ and let $\omega(xv_{2})=5$.
Figure~\ref{cap:barelyK3} shows how Algorithm~\ref{alg:1} finds
$\mathcal{B}(G_{\omega},x)=\{p^{(0)},p^{(11)},p^{(21)},p^{(31)}\}$.
Note that $p^{(12)}$ is added to the queue and only removed later
when $p^{(31)}$ is found. This late recognition of the fact that
$p^{(12)}$ is not barely sufficient is the reason why the algorithm
needs to test $p^{(22)}$ as a candidate.
\end{example}

\section{\label{sec:Squished}Squished distributions}

In this section we prove a version of the Squishing Lemma of \cite{Bunde_optimal}
using open ears. A pebble distribution is \emph{squished} on a path
$P$ if all the pebbles on $P$ are placed on a single vertex of $P$
or on two adjacent vertices of $P$. A pebble distribution can be
made squished on a path using squishing moves. 
\begin{lem}
(Squishing) If vertex $x$ is not reachable from a pebble distribution
$p$ with size $n$, then there is a pebble distribution of size $n$
that is squished on each unweighted open ear and from which $x$ is
still not reachable.\end{lem}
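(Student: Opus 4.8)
The plan is to process one unweighted open ear of $G$ at a time, relocating the pebbles that lie on it while keeping the total number of pebbles fixed and keeping $x$ unreachable. Since distinct open ears share no inner vertices, these relocations can be made compatible (order the ears, or maximize a single potential $\sum_E\Phi_E$ over all distributions of size $n$ from which $x$ is unreachable), so it is enough to treat one ear $E$, with inner path $v_1,\dots,v_n$ and end vertices $v_0,v_{n+1}$. The inner vertices have degree $2$, so every pebbling move incident to them lies on the path $v_0v_1\cdots v_{n+1}$, and $\{v_0,v_{n+1}\}$ separates those inner vertices from the rest of $G$. Writing $q$ for the restriction of $p$ to $\{v_0,\dots,v_{n+1}\}$, this locality lets me encode all of $E$'s contribution to reachability in the \emph{delivery region} $\mathcal C(q)\subseteq{\bf N}^2$ of pairs $(a,b)$ such that the moves on the path $v_0v_1\cdots v_{n+1}$ can turn $q$ into a distribution carrying at least $a$ pebbles on $v_0$ and at least $b$ on $v_{n+1}$.

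Using Theorem~\ref{thm:fundam} (reachability via acyclic move multisets) together with the fact that $\{v_0,v_{n+1}\}$ is a separator, I would show that $x$ is reachable from $p$ if and only if $\mathcal C(q)$ meets a certain upward-closed set $\mathcal F\subseteq{\bf N}^2$ that depends only on the part of $p$ lying off the inner vertices of $E$ — namely the pairs $(a,b)$ from which $x$ becomes reachable once $a$ extra pebbles sit on $v_0$ and $b$ on $v_{n+1}$. Since $\mathcal C(q)$ is downward closed and $\mathcal F$ is upward closed, ``$x$ not reachable from $p$'' is exactly ``$\mathcal C(q)\cap\mathcal F=\varnothing$'', so it suffices to replace $q$ by a distribution $q'$ on the path with $\|q'\|=\|q\|$, support on one vertex or on two consecutive vertices, and $\mathcal C(q')\subseteq\mathcal C(q)$; doing this for each open ear yields the desired squished distribution.

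This reduces the lemma to a self-contained statement about a single unweighted path $v_0v_1\cdots v_{n+1}$ and its region $\mathcal C$. Here I would locate the vertex $v_m$ — or consecutive pair $v_m,v_{m+1}$ — at which the exit costs $2^{j}$ (to $v_0$) and $2^{\,n+1-j}$ (to $v_{n+1}$) are balanced; in the graph this is the ear vertex from which $x$ is hardest to reach, and the relevant unimodality is that of $j\mapsto\min(2^{j}C_0,\,2^{\,n+1-j}C_{n+1})$, where $C_0,C_{n+1}$ are the numbers of pebbles at $v_0$, resp.\ $v_{n+1}$, needed (with the other pebbles of $p$) to reach $x$. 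Then I would push the pebbles of $q$ one step at a time toward $v_m$, keeping the support from growing, until only $v_m$ and $v_{m+1}$ carry pebbles — taking care not to pile so many pebbles on $v_m$ that $v_m$ reaches $x$ by itself (so the target is ``squished toward $v_m$'', not all of $q$ literally on $v_m$). The main obstacle — and the part I expect to be the real work — is verifying that each such size-preserving one-step relocation does not enlarge $\mathcal C$: writing the endpoint deliveries with the exact truncations $\lfloor\,\cdot\,/2^{j}\rfloor$, one must check that every way of splitting the moved pebbles into a flow toward $v_0$ and a flow toward $v_{n+1}$ is matched by a split of the original $q$, and keeping that matching valid through the integer roundings is precisely where the unimodality above must be used with care. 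The leftover cases are routine but need a word: $x$ being itself an end vertex of $E$ (then $p$ carries no pebble there, and the relocation never places one there, so reachability is not created), the presence of a chord $v_0v_{n+1}$ (harmlessly absorbed into the off-ear data defining $\mathcal F$), and the compatibility bookkeeping for the ear-by-ear relocation already mentioned.
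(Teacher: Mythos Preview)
Your setup with the delivery region $\mathcal C(q)\subseteq\mathbf N^2$ and the upward-closed set $\mathcal F$ is sound, and the equivalence ``$x$ not reachable $\iff$ $\mathcal C(q)\cap\mathcal F=\varnothing$'' is exactly right. The gap is in the step you flag as ``the real work'': you claim that each size-preserving one-step relocation toward the chosen $v_m$ \emph{does not enlarge} $\mathcal C$, and you plan to prove the lemma via $\mathcal C(q')\subseteq\mathcal C(q)$. That inclusion is a purely path-theoretic statement, independent of where $v_m$ sits relative to $\mathcal F$, and it is false for one-step moves. On the path $v_0v_1v_2v_3$, take $q'$ with one pebble on each of $v_1,v_2$, so $\mathcal C(q')=\{(0,0)\}$; moving the $v_1$-pebble one step to $v_2$ (toward $v_m=v_2$) gives two pebbles on $v_2$ and $\mathcal C=\{(0,0),(0,1)\}$, strictly larger. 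So the monotonicity you need simply does not hold step-by-step. A related problem is that you parametrize $\mathcal F$ by a single pair $(C_0,C_{n+1})$ to locate $v_m$; in general $\mathcal F$ has several incomparable minimal elements, so the unimodal function $j\mapsto\min(2^jC_0,2^{\,n+1-j}C_{n+1})$ does not capture it, and different minimal elements of $\mathcal F$ can point to different ``hardest'' vertices.

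The paper avoids all of this by using a \emph{symmetric} squishing move instead of a one-sided push: if $v_i$ and $v_j$ are the leftmost and rightmost occupied ear vertices, remove one pebble from each and place both at any $v_k$ with $i<k<j$. The proof that this preserves non-reachability is then a short, direct argument with the transition digraph (an application of Theorem~\ref{thm:fundam}): if $x$ were reachable from the new distribution via an acyclic multiset $S$, take a maximal directed path of $T(G,S)$ starting at $v_k$ and staying inside $\{v_i,\dots,v_j\}$, delete those moves, and check that what remains already reaches $x$ from the original $p$. No choice of $v_m$, no $\mathcal F$, no delivery-region bookkeeping is needed; and because the move is symmetric, squishing on one ear never places pebbles on the inner vertices of another, so the ear-by-ear iteration is immediate. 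If you want to rescue your framework, the natural fix is to replace your one-sided step by this two-sided step; for \emph{that} move one can indeed show $\mathcal C(q')\subseteq\mathcal C(q)$, and your reduction then goes through.
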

\begin{proof}
Let $E$ be an unweighted open ear with consecutive vertices $v_{0},\ldots,v_{n}$.
Suppose that the pebble distribution $p$ is not squished on $E$.
Let $i$ be the smallest and $j$ be the largest index for which $p(v_{i})>0$
and $p(v_{j})>0$. Note that we must have $j-i\ge2$. Define a new
pebble distribution $q$ by applying a squishing move that moves one
pebble from $v_{i}$ to $v_{k}$ and another pebble from $v_{j}$
to $v_{k}$ for some $k$ satisfying $i<k<j$. 

Suppose $x$ is reachable from $q$, that is, there is an acyclic
multiset $S$ of pebbling moves such that $q_{S}\ge1_{\{x\}}$. Pick
a maximal directed path of $T(G,S)$ with consecutive vertices $v_{k}=w_{0},w_{1},\ldots,w_{l}$
all in the set $\{v_{i},v_{i+1},\ldots,v_{j}\}$. Let $D$ be the
set of moves corresponding to the arrows of this directed path, that
is, $D=\{(w_{0}\a w_{1}),\ldots,(w_{l-1}\a w_{l})\}$ and let $R=S\setminus D$.
We need to consider three cases depending on whether $w_{l}=v_{k}$,
$w_{l}\in\{v_{i},v_{j}\}$ or $w_{l}\not\in\{v_{k},v_{i},v_{j}\}$.
It is easy to see that in all three cases we must have $p_{R}\ge1_{\{x\}}$
which is a contradiction. 

Applying squishing moves repeatedly on $E$ makes the pebble distribution
squished on $E$. This procedure keeps the goal vertex $x$ unreachable.
A squishing move on $E$ might remove a pebble from another open ear
but it cannot add a pebble to it. So if the distribution is squished
on an open ear then it remains squished after the application of a
squishing move on $E$. So the desired pebble distribution can be
reached by applying all the available squishing moves on all unweighted
ears in any order.
\end{proof}
The set $\mathcal{I}_{s}(G_{\omega},x)$ of \emph{squished insufficient
distributions} is the set of those elements of $\mathcal{I}(G_{\omega},x)$
that are squished on all open ears of $G$. The set $\mathcal{B}_{s}(G_{\omega},x)$
of \emph{squished barely sufficient distributions} is the set of those
elements of $\mathcal{B}(G_{\omega},x)$ that are squished on all
open ears of $G$.
\begin{prop}
\label{pro:finding_insuff_s}We have $p\in\mathcal{I}_{s}(G_{\omega},x)$
if and only if $q\le p$ does not hold for any $q\in\mathcal{B}_{s}(G_{\omega},x)$.\end{prop}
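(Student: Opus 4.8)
The plan is to mirror the proof of Proposition~\ref{pro:finding_insuff}, but restricted to distributions that are squished on every open ear. The forward direction is the harder one and relies on the Squishing Lemma; the reverse direction is again immediate.

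First I would prove the reverse direction. Suppose $p$ is a pebble distribution, squished on every open ear, such that $q\le p$ fails for all $q\in\mathcal{B}_{s}(G_{\omega},x)$. I want to show $p\in\mathcal{I}_{s}(G_{\omega},x)$, and since $p$ is already squished it suffices to show $p\in\mathcal{I}(G_{\omega},x)$. If instead $x$ were reachable from $p$, then by Proposition~\ref{pro:finding_insuff} there is some $q\in\mathcal{B}(G_{\omega},x)$ with $q\le p$. But here is where I must be a little careful: this $q$ need not itself be squished on the open ears. So I cannot just quote $q\in\mathcal{B}_s$; I instead observe that a distribution dominated by a squished distribution on a path is itself squished on that path (removing pebbles from a squished-on-$P$ distribution keeps the support on $P$ contained in the same one or two adjacent vertices). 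Since $q\le p$ and $p$ is squished on every open ear, $q$ is squished on every open ear, hence $q\in\mathcal{B}_s(G_{\omega},x)$, contradicting our hypothesis.

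Second, the forward direction. Suppose $p\in\mathcal{I}_s(G_{\omega},x)$, so $p$ is squished on every open ear and $x$ is not reachable from $p$. Assume for contradiction that there is some $q\in\mathcal{B}_s(G_{\omega},x)$ with $q\le p$. Since $\mathcal{B}_s\subseteq\mathcal{B}$, $x$ is reachable from $q$, hence from $p$ because reachability is monotone under $\le$ — this already contradicts $p\in\mathcal{I}(G_{\omega},x)$. So in fact the forward direction needs no squishing at all and is just monotonicity of reachability plus $\mathcal{B}_s\subseteq\mathcal{B}$.

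Putting this together, the one genuinely new ingredient compared with Proposition~\ref{pro:finding_insuff} is the observation, in the reverse direction, that being squished on a path is inherited downward in the partial order; everything else is a routine specialization of the earlier argument. (The Squishing Lemma itself is only needed to guarantee that $\mathcal{I}_s$ and $\mathcal{B}_s$ are nonempty/rich enough to be useful in applications, not for this equivalence.) I expect the main obstacle to be stating the downward-closure observation cleanly: one should note that if $p$ is squished on a path $P$ with all its $P$-pebbles on $\{v_i,v_{i+1}\}$ (or a single vertex), and $q\le p$, then $\operatorname{supp}(q)\cap V(P)\subseteq\{v_i,v_{i+1}\}$, so $q$ is squished on $P$ as well. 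With that in hand the proof is a two-line argument in each direction.
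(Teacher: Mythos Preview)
Your proof is correct and follows the same approach as the paper, which simply says ``The result follows from Proposition~\ref{pro:finding_insuff}.'' You have unpacked that one line and made explicit the only nontrivial step: that being squished on a path is downward closed under $\le$, so the $q\in\mathcal{B}(G_\omega,x)$ produced by Proposition~\ref{pro:finding_insuff} automatically lies in $\mathcal{B}_s(G_\omega,x)$ when $p$ is squished. You are also right that the Squishing Lemma is not needed for this equivalence itself, and your observation that the reverse direction tacitly requires $p$ to be squished (otherwise a non-squished insufficient $p$ can satisfy the right-hand side while failing the left) is a point the paper leaves implicit.
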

\begin{proof}
The result follows from Proposition~\ref{pro:finding_insuff}.\end{proof}
\begin{note}
\label{not:modify-alg1}We can find $\mathcal{B}_{s}(G_{\omega},x)$
by a slight modification of Algorithm~\ref{alg:1}. On line~20 we
simply remove $(p,E,W)$ from $Q$ if $p$ is not squished.\end{note}
\begin{cor}
\label{cor:pi-insuff-s}$\pi(G_{\omega},x)=\max\{|p|:p\in\mathcal{I}_{s}(G_{\omega},x)\}+1$. \end{cor}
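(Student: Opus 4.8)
The plan is to derive Corollary~\ref{cor:pi-insuff-s} as a quick consequence of the squishing machinery already in place, paralleling exactly how $\pi(G_\omega,x)$ would be expressed in terms of $\mathcal{I}(G_\omega,x)$ in the unweighted theory. First I would record the elementary observation that $\pi(G_\omega,x) = \max\{|p| : p \in \mathcal{I}(G_\omega,x)\} + 1$: by definition $\pi(G_\omega,x)$ is the least $m$ such that every distribution of size $m$ reaches $x$, so if $N$ denotes the largest size of an insufficient distribution then every distribution of size $N+1$ is sufficient (otherwise it would be an insufficient distribution of size $N+1 > N$), while some distribution of size $N$ is not, giving $\pi(G_\omega,x) = N+1$. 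This uses only that enlarging a sufficient distribution keeps it sufficient, which is stated right before the definition of sufficiency.

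Next I would show that the maximum is unchanged when we restrict to squished insufficient distributions, i.e. $\max\{|p| : p \in \mathcal{I}(G_\omega,x)\} = \max\{|p| : p \in \mathcal{I}_s(G_\omega,x)\}$. The inequality $\ge$ is immediate since $\mathcal{I}_s(G_\omega,x) \subseteq \mathcal{I}(G_\omega,x)$. For $\le$, take any $p \in \mathcal{I}(G_\omega,x)$ of maximum size $n$; the Squishing Lemma produces a distribution $p'$ of the same size $n$ that is squished on every unweighted open ear and from which $x$ is still not reachable, hence $p' \in \mathcal{I}_s(G_\omega,x)$ with $|p'| = n$. Combining this with the first step yields $\pi(G_\omega,x) = \max\{|p| : p \in \mathcal{I}_s(G_\omega,x)\} + 1$, which is the assertion of the corollary. (If one prefers the formulation in terms of $\mathcal{B}_s$, one can instead invoke Proposition~\ref{pro:finding_insuff_s} to identify the maximal squished insufficient distributions as exactly those not dominating any squished barely sufficient distribution, but this is not needed for the size statement.)

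I do not expect any serious obstacle here: the only content is the Squishing Lemma, already proved, together with the trivial monotonicity of sufficiency. The one point worth stating carefully is that squishing preserves \emph{size} exactly (not just $\le$), so that the maximum over $\mathcal{I}_s$ is genuinely attained at the same value $n$ rather than merely bounded above by it; the Squishing Lemma is phrased precisely to give this. A secondary bookkeeping remark is that a distribution that is already squished on all open ears trivially lies in $\mathcal{I}_s$, and that a weighted open ear imposes no squishing requirement, so there is no edge case where $\mathcal{I}_s$ could be empty while $\mathcal{I}$ is not — indeed $1_{\{x\}} \notin \mathcal{I}_s$ but small insufficient distributions like the empty distribution are squished everywhere vacuously.
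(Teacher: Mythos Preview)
Your proposal is correct and follows exactly the paper's approach: the paper's proof is the single sentence ``The result follows from the Squishing Lemma using $\pi(G_{\omega},x)=\max\{|p|:p\in\mathcal{I}(G_{\omega},x)\}+1$,'' and you have simply unpacked both ingredients in detail.
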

\begin{proof}
The result follows from the Squishing Lemma using $\pi(G_{\omega},x)=\max\{|p|:p\in\mathcal{I}(G_{\omega},x)\}+1.$
\end{proof}

\section{\label{sec:find-insuf}Finding insufficient distributions}

\incmargin{1em}
\begin{algorithm2e}[tp]
\dontprintsemicolon
\linesnumbered
\hrule
\medskip
\SetKwFunction{KwPB}{pushBack}
\SetKwFunction{KwBP}{popBack}
\SetKwFunction{KwInsert}{insert}
\SetKwFunction{KwRemove}{remove}
\SetKw{KwBreak}{break}
\SetKw{KwContinue}{continue}
\SetKw{KwWhile}{while}
\KwIn{$\mathcal{C}:=\mathcal{B}_s(G_\omega,x)$}
\KwOut{$\pi(G_\omega,x)$}
\smallskip
\hrule
\smallskip
\For{$v\in V(G)$} {
  $\bar p(v):=\max\{q(v)\mid q\in\mathcal{C}\}$ \hfill 
  {\rm// $p$ is an upper bound for $\mathcal{I}_s(G_\omega,x)$}\;
}
$P$.\KwPB{$\bar p$} \hfill {\rm// ordered queue of not yet squished candidates}\;
\While(\hfill {\rm// more candidates to try}){$P$ {\rm not empty} } {
  $P$.\KwBP{p} \hfill {\rm// work with candidate $p$}\; 
  \If{p {\rm squished}} {
  $Q$.\KwInsert{$(p,1)$} \hfill {\rm// ordered queue of squished candidates} \; 
  \KwContinue \hfill {\rm// nothing more to do with $p$}\;    
  }
  \For(\hfill {\rm// try to improve $p$ }) {$v\in V(G)$} {
       $q:=p$ \hfill {\rm// modify $p$}\;
       $q(v):=0$ \hfill {\rm// this might make it squished} \;
       $P$.\KwInsert{$q$} \hfill {\rm// add improved candidate to the queue}\;
  }
}
$M:=0$ \hfill {\rm// size of best insufficient distribution so far} \;
\While(\hfill {\rm// more candidates to try}){$Q$ {\rm not empty} } {
  $Q$.\KwBP{$(p,i)$} \hfill {\rm// $p$ works for $\mathcal{C}[1],\ldots,\mathcal{C}[i-1]$} \;
  \If(\hfill {\rm// too few pebbles?}) {$|p|\le M$} {
    \KwContinue  \hfill {\rm// candidate has no hope to be better}
  }
  \While(\hfill {\rm// find first $i$ such that $\mathcal{C}[i]\le p$}  )
     {$i\le |\mathcal{C}|$ {\rm and} $\mathcal{C}[i]\not\le p$}{ 
     $i:=i+1$ \hfill {\rm// not found yet}\;
  }
  \If (\hfill {\rm// no such $i$, candidate works}) {$i>|\mathcal{C}|$} {
    $M:= |p|$ \hfill {\rm// $p$ is the best insufficient distribution so far}\;
    \KwContinue \hfill {\rm// nothing more to do with $p$}\;    
  }
%  \Else {
    \For(\hfill {\rm// make candidate work for $\mathcal{C}[i]$}){$v\in V(G)$} {
       $q:=p$ \hfill {\rm// modify $p$}\;
       $q(v):=\mathcal{C}[i](v)-1$ \hfill {\rm// $q$ works for $\mathcal{C}[1],\ldots,\mathcal{C}[i]$} \;
       \If(\hfill{\rm// nonnegative number of pebbles on $v$?}){$q(v)\ge 0$} {
         $Q$.\KwInsert{$(q,i+1)$} \hfill {\rm// add improved candidate to the queue}\;
       }
    }
%  }
}
$\pi(G_\omega,x):=M+1$ \;
\medskip
\hrule
\caption{Algorithm to find the distribution with the most pebbles that is insufficient for the goal
vertex.}
\label{alg2}
\end{algorithm2e}

Now we present an algorithm for finding $\mathcal{I}_{s}(x)$. Let
$\bar{p}$ be the pebble distribution defined by $\bar{p}(v)=\max\{q(v)\mid q\in\mathcal{B}_{s}(x)\}$
for all $v\in V(G)$. It is clear that if $p\in\mathcal{I}_{s}(x)$
then $p\le m$. The idea of the algorithm is to decrease the number
of pebbles at certain vertices of $\bar{p}$ until it becomes insufficient. 
\begin{algorithm}
\label{alg:2}The algorithm shown in Figure~\ref{alg2} finds $\pi(G_{\omega},x)$
using $\mathcal{B}_{s}(G_{\omega},x)$.
\end{algorithm}
The algorithm uses Proposition~\ref{pro:finding_insuff_s} and Corollary~\ref{cor:pi-insuff-s}.
The input $\mathcal{B}_{s}(G_{\omega},x)$ is the output of the modified
Algorithm~\ref{alg:1} as explained in Note~\ref{not:modify-alg1}.
It contains all the squished barely sufficient distributions. Now
we give the detailed explanation of the algorithm:
\begin{itemize}
\item lines 1--3: For each vertex $v$ there is a barely sufficient distribution
that has pebbles only on $v$. So the distribution $\bar{p}$ is an
upper bound for $\mathcal{I}(G_{\omega},x)$. Every insufficient distribution
can be constructed form $\bar{p}$ by decreasing the number of pebbles
on some vertices.
\item lines 4--12: We only need to find those insufficient distributions
that are squished. So instead of using $\bar{p}$ we can use a new
distribution $p$ by removing all the pebbles from $\bar{p}$ at a
few vertices until the distribution becomes squished. Every squished
insufficient distribution can be constructed from one such $p$ by
decreasing the number of pebbles on some vertices. So we collect all
these $p$'s in the queue $Q$ on line 7. The queues $P$ and $Q$
are kept ordered so lines 7 and 12 can use binary searches to avoid
duplication in the queues.
\item line 13: The variable $M$ is initialized, it will contain the maximum
size $\max\{|p|:p\in\mathcal{I}_{s}(G_{\omega},x)\}$ of an insufficient
distribution. 
\item lines 14--27: We are finding $M$ using Corollaries~\ref{pro:finding_insuff_s}
and \ref{cor:pi-insuff-s}. Queue $Q$ contains pairs of the form
$(p,i)$. Such a $p$ is always smaller than the first $i-1$ elements
of $\mathcal{B}_{s}(G_{\omega},x)$. In each iteration we replace
$(p,i)\in Q$ by possibly several new elements in the queue of the
form $(q,i+1)$. Such a $q$ is now smaller than the first $i$ elements
of $\mathcal{B}_{s}(G_{\omega},x)$. We can find such a $q$ by decreasing
the number of pebbles at a vertex where the $i$-th element of $\mathcal{B}_{s}(G_{\omega},x)$
has pebbles. The loop in line 23 finds these vertices. Line 27 uses
binary search.
\item line 28: The pebbling number is calculated according to Corollary~\ref{cor:pi-insuff-s}.
\end{itemize}
~

It is important to keep our queues sorted and to use binary search
at the insert operations. Without this the algorithm becomes too slow
to be practical.

\section{Test results}

We tested our algorithms by calculating the pebbling number of every
connected graph with fewer than 10 vertices. We used Nauty \cite{nauty}
to generate these graphs and their automorphism groups. We simplified
each graph as follows. For each goal vertex we replaced every closed
ears and cut ears with a weighted edge as described in Subsections~\ref{sub:Closed-ears}
and \ref{sub:Cut-ears}. Then we recursively used available leaves
to simplify the graph as much as possible as described in Section~\ref{sec:Simp-leaves}.
Then we run Algorithms~\ref{alg:1} and \ref{alg:2} to find the
pebbling number of the simplified graph.

\begin{table}
\def\ns#1{\goodbreak\smallskip\noindent$|V(G)|=#1$\hfill\\
% \noindent$\pi(G)\qquad f$\hfill\\
\smallskip}
\setlength{\columnseprule}{.5pt}\begin{multicols}{6}
\ns 1
\begin{verbatim}
1       1
\end{verbatim}

\hrule

\ns 2
\begin{verbatim}
2       1
\end{verbatim}

\hrule

\ns 3
\begin{verbatim}
3       1
4       1
\end{verbatim}

\hrule

\ns 4
\begin{verbatim}
4       3	
5       2

8       1	
\end{verbatim}

\hrule

\ns 5
\begin{verbatim}
5      10
6       5

8       2	
9       3

16      1	
\end{verbatim}

\hrule

\ns 6
\begin{verbatim}
6      45
7      15

8      13	
9      16
10     13
11      1

16      4	
17      4

32      1	
\end{verbatim}

\hrule

\ns 7
\begin{verbatim}
7     322
    
8     113
9     125
10    129
11     68
12      4

16     23	
17     35
18     22
19      2
      
32      4	
33      5
      
64      1	
\end{verbatim}

\hrule

\ns8	
\begin{verbatim}
8    4494
9    1658
10   1870
11   1425
12    478
13     26
14      1
      
16    190
17    341
18    333
19    148
20     15

32     36	
33     52
34     34
35      3

64      6	
65      6

128     1	
\end{verbatim}

\hrule

\ns 9	
\begin{verbatim}
9  126646
10  43935
11  41222
12  22756
13   4975
14    208
15      6

16   2505	
17   5293
18   5992
19   4070
20   1310
21    137
22      5
23      8

32    318	
33    626
34    579
35    261
36     33
39      1

64     50	
65     79
66     47
67      4

128     6	
129     7

256     1	
\end{verbatim}
\end{multicols}

\caption{\label{cap:spectrum}The frequency of pebbling numbers for graph with
less than 10 vertices. The data is grouped by the number $|V(G)|$
of vertices in the graph. Each data row contains a possible pebbling
number followed by the frequency of this pebbling number.}

\end{table}

The automorphism group helped us reducing the number of goal vertices
to representatives of orbits. It is well known that the hardest to
reach goal vertex in a tree is a leaf, so in trees we only picked
leaf vertices for the goal vertex. 

The algorithms were coded in C++ using the Standard Template Library.
The code was compiled with the gnu compiler. It took about a day on
a 3 GHz Unix machine to finish the calculations. The calculation for
graphs with fewer than 9 vertices took less than 10 minutes. Table~\ref{cap:spectrum}
shows the frequency of the pebbling numbers. The result confirms the
existence of gaps in the spectrum of pebbling numbers described in
\cite{Cabanski}. We checked our results on many graphs with known
pebbling numbers such as paths, complete graphs, cycles, some trees,
Lemke graph, Petersen graph. The pebbling numbers are available on
the internet.

\section{Further questions}
\begin{enumerate}
\item We do not have any example where $t\mapsto\pi_{t}(G,x)$ is not linear
for $t\ge3$. Is it true that $t\mapsto\pi_{t}(G,x)$ is always linear
for $t\ge t_{0}$ for some $t_{0}$? What properties of the graph
can be used to find $t_{0}$? 
\item Is it possible that the goal vertex $x$ that maximizes $\pi(G,x)$
is an interior vertex of a cut ear of $G$? The answer seems to be
no and it may depend on the first question. We could use this result
to speed up our calculation of $\pi(G)$ since we would have to test
fewer goal vertices. 
\item Is it possible to take advantage of open ears in a better way? Can
we simplify a graph with an open ear so that it has fewer vertices?
This simplification might help tremendously if it is compatible with
ear decomposition. It might be possible to reduce a graph completely
if we know $\pi_{t}$ for all graphs with fewer vertices.
\item Perhaps adding extra weighted edges could speed up Algorithm~\ref{alg:1}
in certain cases. For example in an open ear, we could connect the
end vertices to the interior vertices with appropriate weights depending
on the distance.
\item What general results are there about the pebbling number of weighted
graphs? In particular, does Graham's conjecture hold for weighted
graphs? 
\item What is the pebbling number of simple weighted graphs like the weighted
complete graph or a weighted cycle? Even a weighted triangle seems
to be fairly complicated with a lot of cases to consider.
\end{enumerate}
\bibliographystyle{amsplain}
\bibliography{pebble}

\end{document}